\newtheorem{proposition}{Proposition}
\newtheorem{lemma}{Lemma}
\newtheorem*{corollary}{Corollary}
\newtheorem{definition}{Definition}
\newtheorem{remark}{Remark}
\newtheorem*{example}{Example}
\newcommand{\R}{\mathbb{R}}
\begin{document}

\title{Classical quasi-steady state reduction -- \\
A mathematical characterization}

\author{ Alexandra Goeke\\
Mathematik A, RWTH Aachen \\
52056 Aachen, Germany\\
    \\
Sebastian Walcher\footnote{Corresponding author. Email {\tt walcher@matha.rwth-aachen.de}, Phone +49 241 809 8132, Fax +49 241 809 2212.}\\
Mathematik A, RWTH Aachen \\
52056 Aachen, Germany\\
  \\
Eva Zerz\\
Mathematik D, RWTH Aachen \\
52056 Aachen, Germany\\
}

%\date{}

\maketitle
\begin{abstract} 
We discuss parameter dependent polynomial ordinary differential equations that model chemical reaction networks. By {\em classical quasi-steady state (QSS) reduction} we understand the following familiar (heuristically motivated) mathematical procedure: Set the rate of change for certain (a priori chosen) variables equal to zero and use the resulting algebraic equations to obtain a system of smaller dimension for the remaining variables. This procedure will generally be valid only for certain parameter ranges. We start by showing that the reduction is accurate if and only if the corresponding parameter is  what we call a QSS parameter value, and that the reduction is approximately accurate if and only if the corresponding parameter is close to a QSS parameter value.  The QSS parameter values can be characterized by polynomial equations and inequations, hence parameter ranges for which QSS reduction is valid are accessible in an algorithmic manner. A defining characteristic of a QSS parameter value is that the algebraic variety defined by the QSS relations is invariant for the differential equation. A closer investigation of the associated systems shows the existence of further invariant sets; here singular perturbations enter the picture in a natural manner. We compare QSS reduction and singular perturbation reduction, and show that, while they do not agree in general, they do, up to lowest order in a small parameter, for a quite large and relevant class of examples. This observation, in turn, allows the computation of QSS reductions even in cases where an explicit resolution of the polynomial equations is not possible.\\
{\bf MSC (2010):} 92C45, 34E15, 80A30, 13P10\\
{\bf Key words}: Reaction equations, dimension reduction, singular perturbations, solvability by radicals.\\

\end{abstract}

\section{Introduction and overview}
In chemical reaction networks it is often observed or assumed that, during a relevant time period, the concentration of certain reactants changes negligibly compared to the overall rate of reaction. This {\em quasi-steady state (QSS)} behavior gives rise to  a heuristic reduction procedure for the ordinary differential equation system governing the reaction network: Idealizing the QSS assumption, one sets the net rate of change for each QSS species (i.e., the corresponding entry on the right-hand side of the differential equation) equal to zero and uses the ensuing algebraic equations to obtain a reduced differential equation of smaller dimension. This procedure, which we call {\em classical QSS reduction}, has proven very useful -- and correct -- in various settings for more than a century. The best known example probably is the  Michaelis-Menten system for the action of an enzyme.\\
From a mathematical perspective, a justification of the heuristics -- and even prior to that, a transfer of the underlying scientific assumption to mathematical terms -- is not obvious. Following several decades of ad hoc arguments, mathematicians in the 1960s started to view QSS as a singular perturbation phenomenon, and the first rigorous convergence proofs were given. Moreover, ``slow-fast" timescale arguments inspired by singular perturbation theory were employed to identify parameter ranges for which QSS holds, and this led to mathematical interpretations of QSS that are based on timescale arguments.\\

It could be said that we start  the present paper by turning back the clock: Our vantage point is to focus on the classical reduction procedure in its own right and to determine under which conditions it is valid. We emphasize that a priori we make no additional assumptions concerning slow/fast dynamics, and we will not a priori assume a singular perturbation setting.\\
Throughout we consider a spatially homogeneous setting with constant thermodynamical parameters, and mass action kinetics. Thus the objects of investigation are parameter dependent polynomial (or rational) ordinary differential equations. QSS is understood here to hold for certain chemical species (i.e. variables); slow and fast reactions (and the related partial equilibrium assumption) will not be discussed.\\
We first review the classical reduction procedure and discuss what is necessary and sufficient for this procedure to work. There are some obvious technical prerequisties to ensure a local resolution of the algebraic equations which are implied by the QSS assumption, and which in turn define the QSS variety as their common zero set.  More importantly, the relevant solutions of the reduced differential equation should approximate the solutions of the original system. This is, in our view, the minimal requirement for any sensible QSS reduction. In turn, this minimal requirement provides nontrivial conditions on parameters (rate constants and initial concentrations). If one requires furthermore that the approximation error should become arbitrarily small then one arrives naturally at the notion of a {\em QSS parameter value}: Solutions of the original system and of the QSS-reduced system near the QSS variety are close (on compact time intervals) if and only if the parameter vector is close to some QSS parameter value. Note that we invoked only a minimal requirement, thus a QSS variety may not be attractive; the behavior of the system near a QSS parameter value may require further analysis. \\
For polynomial (or rational) parameter dependent systems, QSS parameter values can be characterized by algebraic equations and inequations, and therefore they are (in principle) computable. Even more, there is a method to compute QSS parameter values via algorithmic algebra. For the relatively low-dimensional systems under consideration in the present paper, standard algorithms and implementations are sufficient, but higher dimensions (or a larger number of parameters) would require more efficient and specialized methods.\\
At a QSS parameter value the differential equation system admits a distinguished invariant set, viz., the QSS variety. Moreover, this variety is frequently the union of subvarieties of smaller dimension. This observation may explain the prevalence of singular perturbation scenarios when QSS holds, and it also implies that certain {\em affine coordinate subspaces } (with all QSS species having a fixed value) are of particular relevance.\\
We proceed to address a problem which is on the one hand obvious but on the other hand is frequently suppressed: The algebraic obstacles when actually carrying out a classical QSS reduction may be formidable. In particular there are many systems for which an explicit reduction (involving only algebraic operations and radicals) is not feasible or does not even exist. We show that, in spite of this fact, for many relevant settings and appropriate QSS parameter values (corresponding to affine coordinate subspaces) the reduction can be carried out explicitly anyway (up to first order in a suitable ``small parameter"). Thus, while the algebraic problem does not vanish, one can frequently circumnavigate it.\\
In the final section of the paper we discuss examples and applications. Several notions, auxiliary results and supplementary material are collected in the Appendix.
\section{Remarks on classical QSS reduction}
\subsection{Some history}
We sketch the origin and some crucial developments of QSS, and briefly mention some recent work of relevance. It seems that QSS arguments originated with the work of Henri \cite{henri} and Michaelis/Menten \cite{MM}; their heuristic arguments seem to be based on consideration of slow and fast reactions. Briggs and Haldane \cite{briggshaldane} seem to have been the first to write down the familiar QSS reduction for complex in the Michaelis-Menten system (under the assumption of small initial enzyme concentration), which is still an indispensable part of every introductory monograph on physical chemistry or biochemistry (see e.g. Atkins and de Paula \cite{ap}). With the emergence of singular perturbation theory, a natural mathematical framework for QSS and QSS reduction became available; see e.g. Heineken et al. \cite{hta}. The broader framework of computational singular perturbation (CSP) methods was later introduced by Lam and Goussis \cite{lamgo}. In order to justify the reduction procedure for Michaelis-Menten mathematically and, at the same time, to determine parameter regions for which it is applicable, two lines of approach were taken: Schauer and Heinrich \cite{hs-inv} required that the relevant trajectories of the full system remain close to the QSS variety for the Michaelis-Menten system, which is defined by stationary complex concentration); this argument was modified and continued in \cite{nw09} and in \cite{cgw}. The second (more prevalent) approach is due to Segel and Slemrod \cite{SSl} who worked with time scale estimates inspired by singular perturbation theory. Among the numerous follow-up publications to \cite{SSl} we only mention some recent papers, viz. the extensive discussion by Goussis \cite{gouQSS}, a definition of QSS in Kollar and Siskova \cite{kosi} which includes exponential attraction to some manifold, and the work by Radulescu et al. \cite{rvg}, Samal et al. \cite{sgfr}, Samal et al. \cite{sgfwr}  who formalized the slow-fast arguments by employing methods from tropical geometry. The approach by Segel and Slemrod (as well as the publications based on it) requires an a priori designation of ``slow'' and ``fast" variables. In \cite{gwz} a method is presented to determine all parameter values for which singular perturbation reduction in the sense of Tikhonov and Fenichel works, with no a priori assumptions necessary; see Appendix, \ref{sptredu}. Samal et al. \cite{sgfwr} -- roughly speaking -- look for cancellation of fast reaction terms; their a priori assumption is the existence of slow and fast species, but no a priori choice of species is required. 
A practical problem regarding classical QSS reduction is due to the fact that an explicit resolution of the equations stemming from QSS may be cumbersome or even not possible. Using Abel's theorem on the solvability of polynomials by radicals, Pantea et al. \cite{pgrc} recently gave several examples for which an explicit resolution is impossible. At first glance this imposes a serious restriction on the applicability of the method.

\subsection{Benchmark example: Michaelis-Menten}
We refer to a very well-known reaction network and its associated differential equation to review the standard quasi-steady state reductions, and the underlying assumptions. In the course of the paper we will also employ this system for examples and to illustrate some concepts. We also will provide some new aspects for this system in the following sections; in particular we will find all parameter values near which QSS reduction is approximately accurate.\\
The reversible Michaelis-Menten reaction is defined by the reaction scheme
\[
E+S{\overset{k_1}{\underset{k_{-1}}\rightleftharpoons}}
C{\overset{k_2}{\underset{k_{-2}}\rightleftharpoons}} E+P,
\]
with an associated differential equation for the concentrations 
\begin{equation}\label{mimerev}
\begin{array}{clccl}
\dot{s}&=-&k_1e_0s&+&(k_1s+k_{-1})c,\\
\dot{c}&= &k_1e_0s&-&(k_1s+k_{-1}+k_2)c+k_{-2}(e_0-c)(s_0-s-c),\\
\end{array}
\end{equation}
usually with initial values $s(0)=s_0>0$ and $c(0)=0$. In the special case $k_{-2}=0$ one speaks of the irreversible Michaelis-Menten system; with differential equation
\begin{equation}\label{mimeirrev}
\begin{array}{clccl}
\dot{s}&=-&k_1e_0s&+&(k_1s+k_{-1})c,\\
\dot{c}&= &k_1e_0s&-&(k_1s+k_{-1}+k_2)c.\\
\end{array}
\end{equation}

%%%%%%%%%%%%%%%%%%%%%%%%%%%%%%%%%%%%%
\subsubsection{Quasi-steady state for complex}
Classical quasi-steady state reduction for complex goes back to Briggs and Haldane \cite{briggshaldane}: One assumes that the rate of change for complex concentration is (almost) equal to zero and uses the ensuing algebraic equation to eliminate $c$ from the differential equation for $s$. The familiar result for the irreversible system is the Michaelis-Menten equation
\begin{equation}\label{mimeirrevqssc}
\dot s =-e_0\frac{k_1k_2s}{k_1s+k_{-1}+k_2}. 
\end{equation}
For the reversible system the condition ``$\dot c=0$" yields the quadratic equation
\[
k_1e_0s-(k_1s+k_{-1}+k_2)c+k_{-2}(e_0-c)(s_0-s-c)=0
\]
for $c$, with solution 
\[
c=\frac{1}{2k_{-2}}\left(t-\sqrt{t^2-4e_0k_{-2}(k_1s+k_{-2}(s_0-s))}\right)
\]
(the negative sign is forced by $c\leq e_0$), where
\[
t:=k_1s+k_{-1}+k_2+k_{-2}(e_0+s_0-s).
\]
 One then has to substitute this value for $c$ in the differential equation for $s$. The procedure has been carried out (see e.g. Miller and Alberty \cite{MiAl}) but it is rarely used; one reason may be the unwieldiness of the algebraic manipulations.\\
Implicit in such a procedure is the understanding that it will be valid only in certain parameter regions. A typical assumption for Michaelis-Menten is small initial enzyme concentration; in other words the system is being considered in the limit $e_0\to 0$. But for $e_0\to 0$ there also exists a singular perturbation reduction (on the asymptotic slow manifold defined by $c=0$; see e.g. \cite{nw11}, subsection 3.1), which yields
the reduced equation
\begin{equation}\label{mimerevqssc}
\dot s =-e_0\frac{k_1k_2s+k_{-1}k_{-2}(s-s_0)}{k_1s+k_{-1}+k_2+k_{-2}(s_0-s)}. 
\end{equation}
This coincides with the QSS reduction in the irreversible setting (when $k_{-2}=0$) but has a markedly different appearance from the classical reduction when $k_{-2}\not=0$. However, the right hand side of the equation obtained by QSS reduction and the right hand side of \eqref{mimerevqssc} agree up to first order in the small parameter $e_0$. To verify this, note that for $e_0\ll 1$ one has
\[
\begin{array}{rcl}
c&=&\frac{t}{2k_{-2}}\left(1-\sqrt{1-4e_0k_{-2}(k_1s+k_{-2}(s_0-s))/t^2}\right)\\
 &\approx& \frac{t}{2k_{-2}}\left(e_0k_{-2}(k_1s+k_{-2}(s_0-s))/t^2\right)
\end{array}
\]
with the familiar approximation $\sqrt{1+x}\approx 1+x/2$. Upon substituting this expression in \eqref{mimeirrev}, and considering only terms of lowest order in $e_0$, one obtains \eqref{mimerevqssc}. Thus, although starting from different vantage points, both reduction methods (essentially) yield the same result.
In Section \ref{secstructure} we will provide an explanation for this observation.
%%%%%%%%%%%%%%%%%%%%%%%%%%%%%
\subsubsection{Quasi-steady state for substrate}\label{subsecsubs}
Setting $\dot s=0$ in the reversible system \eqref{mimerev} one finds the classical reduced equation
\[
\dot c=-\frac{k_1k_2+k_{-1}k_{-2}}{k_1}\cdot c +k_{-2}\cdot(e_0-c)(s_0-c)
\]
for quasi-steady state with respect to $s$.
From \cite{gwz2}, Subsection 7.2 (using arguments similar to those in Section \ref{Formalsec} below) one finds that the quasi-steady state reduction for substrate works -- in the very basic sense that the reduced equation provides an approximately accurate solution of \eqref{mimerev} -- whenever $k_{-1}$ is small compared to other parameters. (One may directly infer this from the obvious exact invariance of the line $s=0$ in case $k_{-1}=0$.) \\
For the irreversible system Segel and Slemrod \cite{SSl} used time scale estimates for \eqref{mimeirrev} to predict QSS for substrate (``reverse QSS") when
\[
k_{-1}\approx k_2 \text{  and  }\frac{k_{-1}}{k_1e_0}\ll 1.
\]
These stronger conditions (given that $k_{-2}=0$) may be  translated to $k_{-1}=\epsilon k_{-1}^*$ and $k_2=\epsilon k_2^*$ with $\epsilon \to 0$, or alternatively to $e_0\to\infty$ (with a change of time scale). They lead to a singular perturbation reduction (see \cite{gw}, Example 8.6 and \cite{nw11}, Subsection 3.2 respectively) which is consistent with the QSS reduction.\\
The discussion and a numerical example in \cite{GSWZ1}, Section 4 for the limiting case $k_{-1}\to 0$ (with no condition on $k_2$) show that the approximation quality by the QSS reduction depends on the eigenvalues of the linearization at the stationary point. The curve given by $s=0$ is always invariant, but nearby solutions may not be locally attracted to this curve, since the generic direction of approach to the stationary point $0$ (which is an attracting node) may {\em not} be tangent to $s=0$ but to the other eigenspace of the linearization. (This occurs whenever  $k_2>k_1e_0$.) In such a situation, numerical examples show poor approximation quality for the QSS reduction.\\

There are some further notions of quasi-steady state for Michaelis-Menten, e.g.~the notion of {\em total quasi-steady state (tQSS)} introduced by Borghans et al. \cite{borg}, which we will not discuss in the present paper. 
%%%%%%%%%%%%%%%%%%%%%%%%
\section{Classical QSS for chemical species}\label{Formalsec}
We first establish a formal framework for classical QSS reduction of parameter-dependent (reaction) equations. 
\subsection{Notation}
Throughout the paper we will consider an ordinary differential equation
\begin{equation}\label{sys}
\dot x = h(x,\pi),\quad x\in \mathbb R^n,\quad \pi\in \mathbb R^m
\end{equation}
with $h$ a polynomial in variables $x$ and parameters $\pi$. (Most results also hold for, or are readily adapted to, rational functions.)
We think of this system as describing the time evolution of a spatially homogeneous chemical reaction network with mass-action kinetics and fixed thermodynamical parameters. Therefore we are mostly interested in settings when all the parameters, which represent rate constants or initial concentrations, are nonnegative, and for every nonnegative parameter vector the positive orthant $\mathbb R_+^n$ is positively invariant for \eqref{sys}. (The variables represent concentrations of chemical species.) 

As a matter of notation, by $Dh(x,\,\pi)$ we denote the derivative of $h$ with respect to $x$. For any smooth function $\theta:\, V\to \mathbb R$ (with $V$ an open subset of $\mathbb R^n\times \mathbb R^m$), we denote by  $L_h(\theta)$ the Lie derivative with respect to $x$, i.e.
\[
L_h(\theta)(x,\pi)=D\theta(x,\pi)h(x,\pi).
\]
Lie derivatives play an important role in invariance criteria; see Lemma \ref{invlem} in the Appendix, \ref{invapp}.
%%%%%%%%%%%%%%%%%%%%%%%%%%%%%%%%%%%
\subsection{The QSS reduction procedure}\label{subs32}
The basic procedure underlying the classical reduction heuristics is to eliminate certain variables by setting their rates of change equal to zero, and to utilize the resulting algebraic equations. In many familiar instances of QSS reduction, the algebraic equations are amenable to an explicit solution.  But this is not always the case and should even be considered an exception. Therefore we introduce, in addition to explicit QSS reduction, also implicit QSS reduction. In the latter scenario the reduced equation ``lives'' on an algebraic subvariety of $\mathbb R^n$. Up to coordinate transformations (which may not be explicitly available) the two versions are equivalent.   \\

 In the following, let $1\leq r<n$; we will consider QSS reduction of \eqref{sys} with respect to the ``species" $x_{r+1},\ldots, x_n$. We fix some notation.
\begin{itemize}
\item Let $1\leq r<n$ and
\[
\begin{array}{rcl}
x^{[1]}&:=&(x_1,\ldots, x_r)^{\rm tr}; \quad x^{[2]}:=(x_{r+1},\ldots, x_n)^{\rm tr}\\
h^{[1]}&:=&(h_1,\ldots, h_r)^{\rm tr}; \quad h^{[2]}:=(h_{r+1},\ldots, h_n)^{\rm tr}.
\end{array}
\]
By $D_i$ we denote the partial derivative with respect to $x^{[i]}$.
\item Given $\pi\in \mathbb R^m$, we let $Y_\pi$ be the set of zeros of $h^{[2]}(\cdot,\pi)$. (This is an algebraic variety.)
\end{itemize}
\begin{definition}\label{qssbase} 
If there is $y\in Y_\pi$ such that $D_2h^{[2]}$ has full rank $n-r$ at $(y,\pi)$ then we denote by $U_\pi\subseteq Y_\pi$ a relatively Zariski-open neighborhood of  $y$ in which this rank is maximal. We will furthermore assume (with no loss of generality) that $U_\pi$ is irreducible, and call $U_\pi$ a {\em QSS variety} with respect to $x_{r+1},\ldots, x_n$.
\end{definition}
In this definition we relied on some elementary properties of algebraic varierties, which are recalled in the Appendix, \ref{algvarapp}. The rank condition in Definition \ref{qssbase} ensures that $U_\pi$ is a submanifold of dimension $r$. Moreover, by the implicit function theorem, there exists a smooth function $\Psi$ of $x^{[1]}$ (defined on some open set in $\mathbb R^r$) such that a neighborhood $\widetilde U_\pi\subseteq U_\pi$ of $(y,\pi)$ can be represented as the graph of $\Psi$. The following provides a description of the classical reduced equation.
\begin{definition}\label{qssexpl} Assume that the rank condition for  $D_2h^{[2]}$ from Definition \ref{qssbase} holds at $(y,\pi)$, and let $\widetilde U_\pi\subset U_\pi$ be the graph of the smooth function $\Psi$. Then the differential equation
\begin{equation}\label{qssredsysex}
\dot x^{[1]}= h^{[1]}( x^{[1]},\Psi( x^{[1]}),\pi)
\end{equation}
will be called an {\em explicit QSS reduction} of \eqref{sys} near $(y,\pi)$, with respect to the species $x_{r+1},\ldots, x_n$.
\end{definition}
An explicit form (e.g. involving only radicals) of $\Psi$ may not exist; see Pantea et al. \cite{pgrc}. This is one reason to introduce a second version.
\begin{definition}\label{QSSdef} Let the notation and assumptions of Definition \ref{qssbase} be given. Then
 the following equation will be called an {\em implicit QSS-reduced equation} of \eqref{sys} on $U_\pi$, with respect to the species $x_{r+1},\ldots, x_n$:
\begin{equation}\label{qssredsys}
\begin{array}{rcl}
\dot x^{[1]}&=& h^{[1]}(x,\pi)\\
\dot x^{[2]}&=& -D_2h^{[2]}(x,\pi)^{-1}D_1h^{[2]}(x,\pi)h^{[1]}(x,\pi).
\end{array}
\end{equation}
We will briefly write $\dot x =h_{\rm red}(x,\pi)$ for this equation.
\end{definition}
These two versions admit (locally) the same solutions, in the following sense.
\begin{lemma}\label{eximplem}
\begin{enumerate}[(a)]
\item Given the setting of Definition \ref{QSSdef}, the variety $U_\pi$ is invariant for system \eqref{qssredsys}.
\item For any solution $z(t)=(z^{[1]}(t),\, z^{[2]}(t))$ of \eqref{qssredsys} on $U_\pi$ one has that $z^{[1]}(t)$ locally solves system \eqref{qssredsysex}. For any solution $v(t)$ of \eqref{qssredsysex} one has that $(v(t),\,\Psi(v(t))$ locally solves system  \eqref{qssredsys}.
\end{enumerate}
\end{lemma}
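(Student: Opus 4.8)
The plan is to handle the two parts with a shared computational core: the formula defining $\dot x^{[2]}$ in \eqref{qssredsys} is engineered precisely so that $h^{[2]}$ is constant along the reduced flow, and this single fact drives both the invariance in (a) and the solution correspondence in (b). For part (a) I would invoke the Lie-derivative invariance criterion (Lemma \ref{invlem} in the Appendix): since $U_\pi$ is contained in the zero set $Y_\pi$ of $h^{[2]}(\cdot,\pi)$, it suffices to show that the Lie derivative of $h^{[2]}$ along $h_{\rm red}$ vanishes on $U_\pi$. I in fact expect to prove the stronger statement that $L_{h_{\rm red}}(h^{[2]})$ vanishes identically wherever $D_2h^{[2]}$ is invertible, so that $h^{[2]}$ is a first integral of the reduced field and every one of its level sets, $U_\pi$ in particular, is invariant.

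For part (a) the computation is a one-line application of the chain rule. Writing
\[
L_{h_{\rm red}}(h^{[2]}) = D_1h^{[2]}\,\dot x^{[1]} + D_2h^{[2]}\,\dot x^{[2]}
\]
and substituting $\dot x^{[1]}=h^{[1]}$ together with the defining expression $\dot x^{[2]} = -D_2h^{[2]}{}^{-1}D_1h^{[2]}h^{[1]}$ from \eqref{qssredsys}, the two terms cancel and the expression is zero. The rank hypothesis of Definition \ref{qssbase} guarantees that $D_2h^{[2]}$ is an invertible $(n-r)\times(n-r)$ matrix on $U_\pi$, hence on a neighborhood by continuity, so the inverse is well defined and the cancellation is legitimate.

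For part (b) I would exploit the graph representation $x^{[2]}=\Psi(x^{[1]})$ on $\widetilde U_\pi$ and the fact that, since the graph lies in $Y_\pi$, the identity $h^{[2]}(x^{[1]},\Psi(x^{[1]}),\pi)\equiv 0$ holds. Differentiating this identity in $x^{[1]}$ and solving gives the implicit-function relation $D\Psi = -D_2h^{[2]}{}^{-1}D_1h^{[2]}$ evaluated on the graph. Given a solution $z=(z^{[1]},z^{[2]})$ of \eqref{qssredsys} on $U_\pi$, the relation $z^{[2]}=\Psi(z^{[1]})$ turns the first line of \eqref{qssredsys} verbatim into \eqref{qssredsysex}, so $z^{[1]}$ solves the explicit reduction. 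Conversely, for a solution $v$ of \eqref{qssredsysex}, the curve $w=(v,\Psi\circ v)$ satisfies the first line of \eqref{qssredsys} by construction, while the chain rule yields $\tfrac{d}{dt}\Psi(v)=D\Psi(v)\,h^{[1]}(w,\pi)$, which by the relation for $D\Psi$ is exactly the second line of \eqref{qssredsys}.

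The arguments are, at bottom, bookkeeping with the chain rule and the identity for $D\Psi$; I do not expect a deep obstacle. The one point demanding care is the qualifier \emph{locally}: both the reduced system \eqref{qssredsys} and the function $\Psi$ are only defined on the neighborhood $\widetilde U_\pi$ where $D_2h^{[2]}$ is invertible and the graph representation is valid, so the solution correspondence persists only as long as trajectories remain in this neighborhood. Keeping track of this domain of definition, rather than any nontrivial estimate, will be the main (mild) difficulty.
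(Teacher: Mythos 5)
Your proposal is correct and follows essentially the same route as the paper: the paper's proof also verifies $L_{h_{\rm red}}(h_j)=0$ for $r+1\leq j\leq n$ (the same cancellation you compute) and invokes Lemma \ref{invlem}, then settles part (b) by "invariance and differentiation rules," which is precisely your chain-rule argument via $D\Psi=-\left(D_2h^{[2]}\right)^{-1}D_1h^{[2]}$ on the graph. Your write-up merely makes explicit what the paper leaves to the reader, including the correct attention to the local domain where $D_2h^{[2]}$ is invertible and the graph representation holds.
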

\begin{proof} To prove invariance, verify that $L_{h_{\rm red}}(h_j)=0$ for $r+1\leq j\leq n$ and use Lemma \ref{invlem} in the Appendix, \ref{invapp}. Part (b) follows by invariance and differentiation rules.
\end{proof}
The reasoning which underlies Definition \ref{QSSdef} and Lemma \ref{eximplem} is known from the literature; see Gear and Kevrekidis \cite{GeKe}, Zagaris et al. \cite{ZKK}. In particular, equation \eqref{qssredsys} can be derived from \cite{ZKK}, equation (3.5) with $f^s$ standing for the rates of change of the QSS variables, and also from \cite{ZKK}, equation (5.8) assuming the iteration is stationary.\\
It may be advantageous to employ an implicit version of the reduction; see Bennett et al. \cite{bvth}, Kumar and Josic \cite{kujo}, Section 2. In any case, the implicit version will prove useful for discussing questions of accuracy. Essentially the same characterization of a reduced system is used by Kollar and Siskova \cite{kosi} in their definition and analysis of QSS reduction. We note that there exist different-looking versions of \eqref{qssredsys} on the variety; see Appendix, \ref{varredsys}.
%%%%%%%%%%%%%%%%%%%%%%%%
\subsection{Accuracy and approximate accuracy}
So far we only discussed the formalities of the QSS reduction procedure but we were not concerned with any actual correspondence between solutions of \eqref{sys} and \eqref{qssredsys}. Indeed there is no a priori reason to assume any such correspondence, and this is the focus of the present subsection. If a parameter value is such that the QSS variety is invariant for \eqref{sys} then we call it a {\em QSS parameter value}. We will instantly show that the QSS reduction is accurate (i.e., solutions of \eqref{sys} and \eqref{qssredsys} with initial values on the QSS variety are equal) if and only if one has a QSS parameter value. By continuous dependence one obtains that the QSS reduction is approximately accurate if a parameter value is close to a QSS parameter value. Using a further (elementary but possibly less familiar) argument we will show that the QSS reduction is approximately accurate (up to an arbitrarily small error) only if the parameter is close to a QSS parameter value. We carry this out in detail because it is a crucial point: We obtain a minimal requirement for validity of the QSS reduction procedure at some given parameter.
\begin{definition}\label{qsspvdef}
We call the parameter value $\pi^*$ {\em a QSS parameter value} with respect to the species $x_{r+1},\ldots, x_n$ if $D_2h^{[2]}(y,\pi^*)$ has rank $n-r$ at some $y\in Y_{\pi^*}$, and $U_{\pi^*}$ is invariant for \eqref{sys}. 
\end{definition}
By irreducibility, this is equivalent to invariance of the intersection of $U_{\pi^*}$ with some neighborhood of $y^*\in U_{\pi^*}$.
We first show that at QSS parameter values, and only at these, the reduction provides solutions of the original system \eqref{sys}.
\begin{proposition}\label{consexact}
Let $\pi$ be given such that the rank condition on $D_2h^{[2]}$ from Definition \ref{qssbase} holds, and let $(y,\pi)\in U_\pi$. Then the following are equivalent.
\begin{enumerate}[(a)]
\item The solutions of \eqref{sys} and of  \eqref{qssredsys} with initial value in $U_\pi$ are equal.
\item $U_\pi$ is  invariant with respect to \eqref{sys}.
\end{enumerate}
\end{proposition}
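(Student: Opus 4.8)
The plan is to reduce everything to a single pointwise comparison of the two vector fields on $U_\pi$. The key fact I would isolate first is that \emph{the restrictions of $h$ and $h_{\mathrm{red}}$ to $U_\pi$ coincide if and only if $U_\pi$ is invariant for \eqref{sys}}. This equivalence is the engine for the implication (b) $\Rightarrow$ (a); the reverse implication (a) $\Rightarrow$ (b) will instead follow directly from the invariance of $U_\pi$ for \eqref{qssredsys} already recorded in Lemma \ref{eximplem}(a).

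To establish the key fact, fix $\pi$ and work on $U_\pi$. Because $U_\pi \subseteq Y_\pi$, we have $h^{[2]} = 0$ identically on $U_\pi$, while the first block of $h_{\mathrm{red}}$ equals $h^{[1]}$ by definition; hence the two fields can differ only in their second block. Writing $L_h(h^{[2]})$ for the column $(L_h(h_{r+1}),\dots,L_h(h_n))^{\mathrm{tr}}$ and expanding $L_h(h^{[2]}) = D_1h^{[2]}\,h^{[1]} + D_2h^{[2]}\,h^{[2]}$, the term $D_2h^{[2]}\,h^{[2]}$ vanishes on $U_\pi$, so that $L_h(h^{[2]})|_{U_\pi} = (D_1h^{[2]}\,h^{[1]})|_{U_\pi}$. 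The invariance criterion (Lemma \ref{invlem}) identifies invariance of $U_\pi = \{h^{[2]}=0\}$ under \eqref{sys} with the vanishing of $L_h(h^{[2]})$ on $U_\pi$, i.e. with $(D_1h^{[2]}\,h^{[1]})|_{U_\pi} = 0$. Substituting this into the second block of $h_{\mathrm{red}}$ (legitimate since $D_2h^{[2]}$ is invertible on $U_\pi$ by the rank condition) yields $h_{\mathrm{red}}^{[2]}|_{U_\pi} = -(D_2h^{[2]})^{-1}(D_1h^{[2]}\,h^{[1]})|_{U_\pi} = 0 = h^{[2]}|_{U_\pi}$, proving the claim.

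With the claim in hand I would finish as follows. For (b) $\Rightarrow$ (a): let $\phi$ solve \eqref{sys} with $\phi(0)\in U_\pi$. Invariance keeps $\phi(t)$ in $U_\pi$, where $h=h_{\mathrm{red}}$, so $\phi$ also solves \eqref{qssredsys}; uniqueness of solutions for the (locally Lipschitz) polynomial field $h_{\mathrm{red}}$ then forces $\phi$ to equal the corresponding solution of \eqref{qssredsys}. For (a) $\Rightarrow$ (b): by Lemma \ref{eximplem}(a) any solution of \eqref{qssredsys} starting in $U_\pi$ stays in $U_\pi$; if it agrees with the solution of \eqref{sys} sharing its initial value, then that \eqref{sys}-solution also stays in $U_\pi$, and since the initial value was arbitrary in $U_\pi$ this is precisely invariance of $U_\pi$ for \eqref{sys}.

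The only real care needed — and the place where I expect to have to be precise rather than where I expect genuine difficulty — is the application of the invariance criterion: one must note that, by the rank hypothesis on $D_2h^{[2]}$, the set $U_\pi$ is a regular submanifold of codimension $n-r$ locally cut out by $h^{[2]}$, so that the Lie-derivative test of Lemma \ref{invlem} applies verbatim, and one must keep all statements local near $(y,\pi)$ on the irreducible piece $U_\pi$, matching the local nature of the solutions being compared in (a).
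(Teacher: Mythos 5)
Your proof is correct and follows essentially the same route as the paper's: both use the Lie-derivative invariance criterion of Lemma \ref{invlem}, together with the vanishing of $h^{[2]}$ on $U_\pi$ and the invertibility of $D_2h^{[2]}$, to identify invariance of $U_\pi$ under \eqref{sys} with the identity $h=h_{\rm red}$ on $U_\pi$. The only difference is one of detail: you make explicit the passage between equality of the vector fields on $U_\pi$ and equality of solutions (via Lemma \ref{eximplem}(a) and uniqueness of solutions), a step the paper compresses into the phrase ``by construction.''
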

\begin{proof} According to Lemma \ref{invlem} invariance for \eqref{sys} holds if and only if
\[
D_1h^{[2]}(x,\pi)h^{[1]}(x,\pi)+D_2h^{[2]}(x,\pi)h^{[2]}(x,\pi)=0\text{  on  }U_\pi.
\]
This is, by construction, equivalent to $h(x,\pi)=h_{\rm red}(x,\pi)$ on $U_\pi$. 
\end{proof}

\begin{example} Consider the irreversible Michaelis-Menten system \eqref{mimeirrev}, with $\pi=(e_0, k_1, k_{-1}, k_2)$.
\begin{enumerate}[(a)]
\item With QSS for complex, one has a QSS parameter value $\pi^*=(0,k_1,k_{-1}, k_2)$ with all $k_i>0$, since 
\[
h^{[2]}=-(k_1s+k_{-1}+k_2)\cdot c
\]
with $\pi=\pi^*$. The variety  $U_{\pi^*}$ is defined by $c=0$ and clearly invariant (and the rank condition is also satisfied).
\item Now consider QSS for substrate $s$. (We rearrange variables to $(c,s)$ in order to remain within the notational framework introduced in subsection \ref{subs32}.) Here $\pi^*=(e_0, k_1,0,k_2)$ with positive entries $e_0, k_1,k_2$ is a QSS parameter value for $s$, since
\[
\dot s=h^{[2]}=-k_1(e_0-c)\cdot s
\]
and the QSS variety, which is characterized by $s=0$, is invariant.
\end{enumerate}
At this point we do not (yet) address the question how QSS parameter values can be determined; see subsection \ref{compiss} below.
\end{example}
As in these examples, QSS parameter values frequently describe degenerate settings which, by themselves, seem of little interest for applications. (For instance, an enzyme reaction with zero enzyme concentration is hardly relevant.) But small perturbations of such degenerate settings turn out to be relevant.\\
Next we will therefore establish that for parameters near a QSS parameter value one has approximate accuracy, which is hardly surprising. More importantly, on the other hand we will obtain lower bounds for the norm of the difference of solutions of  \eqref{sys} and of  \eqref{qssredsys} with initial value in $U_\pi$ whenever $\pi$ is not a QSS parameter value. 

The proofs of the following statements are rather elementary, and are based on familiar theorems. We move them to the Appendix, \ref{depresults}, because the technicalities are not relevant for the focus of the present paper. However, we will state the relevant conditions and facts in detail.\\
Thus consider equation \eqref{sys} and the reduced system \eqref{qssredsys} on a suitable compact set $K^*\subseteq \mathbb R_+^n\times\mathbb R_+^m$ with nonempty interior. For the remainder of this subsection, norm always means the maximum norm, resp. the corresponding operator norm. By $\overline {B_r(y)}$ we denote the closed ball in $\mathbb R^n$ with center $y$ and radius $r>0$. 
\begin{itemize}
\item  We assume that $h^{[2]}(\widehat y,\widehat\pi)=0$ for some $(\widehat y,\widehat\pi)$ in the interior ${\rm int}\, K^*$.
\item We assume that $D_2h^{[2]}(x,\pi)$ is invertible  for all $(x,\pi)\in K^*$.
\item We assume that there exist $y_0\in\mathbb R^n$ and $r>0$ with the following property: Whenever $(x,\pi)\in K^*$ for some $x\in \mathbb R^n$ and some $\pi\in\mathbb R^m$ then  $\overline {B_r(y_0)}\times\{\pi\}\subseteq K^*$.
\item Let $R>0$ such that $\Vert h(x,\pi)\Vert\leq R$ and $\Vert h_{\rm red}(x,\pi)\Vert\leq R$  for all $(x,\pi)\in K^*$.
\item Let $L>0$ such that $\Vert Dh(x,\pi)\Vert\leq L$ and  $\Vert Dh_{\rm red}(x,\pi)\Vert\leq L$  for all $(x,\pi)\in K^*$.
\end{itemize}
These conditions imply that every $U_\pi$, with $\pi$ near $\widehat\pi$, is a submanifold. Note that every $(y_0,\pi_0)$ with $y_0$ in the interior of $\mathbb R^n_+$ is contained in some $K^*$ that satisfies the last three of the above conditions. \\
\begin{proposition}\label{consapprox}Assume that the above conditions are satisfied for $K^*$.
\begin{enumerate}[(a)]
\item Let $\pi$ be given such that $U_\pi\times\{\pi\}$ has nonempty intersection with ${\rm int}\,K^*$, let $(y,\pi)$ be a point in this intersection and $V_\pi\subseteq \mathbb R^n$ be some open neighborhood of $y$ such that $(V_\pi\cap U_\pi)\times\{\pi\}\subseteq K^*$. Moreover let $T>0$ such that the solution of \eqref{sys} with initial value $y$ exists and remains in $V_\pi$ for all $t\in[0,\,T]$. Then there exists a compact neighborhood $A_\pi\subseteq V_\pi$ of $y$ with the following properties:  (i) For every $z\in A_\pi$ the solution of \eqref{sys} with initial value $z$ exists and remains in $V_\pi$ for all $t\in[0,\,T]$. 
(ii) For every $\epsilon>0$ there is a $\delta_1>0$ such that the solution of \eqref{qssredsys} with initial value $z\in A_\pi\cap U_\pi$ exists and remains in $V_\pi$ for $0\leq t\leq T$ whenever $\Vert h-h_{\rm red}\Vert<\delta_1$ on $V_\pi$.
(iii) For every $\epsilon>0$ there is a $\delta\in (0,\,\delta_1]$  such that the difference of the solutions of \eqref{sys} resp. of  \eqref{qssredsys} with initial value $z\in A_\pi\cap U_\pi$ has norm less than $\epsilon$ for all $t\in \left[0,T\right]$ whenever  $\Vert h-h_{\rm red}\Vert<\delta$ on $V_\pi$.
\item Let $y \in U_\pi$  and let $\rho_0>0$ such that 
\[
\overline{B_{\rho_0/2L}(y)}\times\{\pi\}\subseteq K^*.
\]
Let $\rho\leq \rho_0$ such that $\Vert h(y,\pi)-h_{\rm red}(y,\pi)\Vert\geq2\rho$. Then for $t^*:=\rho/(2LR)$ the solutions of \eqref{sys} resp. of \eqref{qssredsys} with initial value $y$  exist and remain in $B_{\rho_0/2L}(y)$ for $0\leq t\leq t^*$, and their difference has norm at least $\rho^2/(2LR)$ at $t=t^*$.
\end{enumerate}
\end{proposition}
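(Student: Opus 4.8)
The plan is to prove the two parts separately, as they rest on different elementary facts. Part (a) is essentially a continuous-dependence argument, while part (b) is a quantitative lower bound derived from a Gr\"onwall-type estimate.

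For part (a), I would first use the local existence and the assumed bound $\Vert h\Vert\leq R$ to find a compact neighborhood $A_\pi\subseteq V_\pi$ of $y$ on which all solutions of \eqref{sys} stay inside $V_\pi$ up to time $T$; this gives claim (i) by a standard argument on the domain of the flow (shrink $A_\pi$ so that the distance of the trajectory to $\partial V_\pi$ stays positive, using uniform continuity of the flow in the initial value). For claims (ii) and (iii) the key observation is that \eqref{sys} and \eqref{qssredsys} are two vector fields that differ by $h-h_{\mathrm{red}}$, which is \emph{small in sup-norm} on $V_\pi$ precisely because, by construction of $h_{\mathrm{red}}$, the two fields agree exactly on the QSS variety $U_\pi$ (cf.\ Proposition \ref{consexact}) and $h-h_{\mathrm{red}}$ vanishes there. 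I would invoke the standard estimate for solutions of two ODEs with the same initial value: if $\dot u=f(u)$, $\dot w=g(w)$, $u(0)=w(0)$, with $f$ Lipschitz with constant $L$ and $\Vert f-g\Vert\leq\delta$ on the relevant region, then $\Vert u(t)-w(t)\Vert\leq\frac{\delta}{L}(e^{Lt}-1)$ on $[0,T]$. Choosing $\delta_1$ small enough first guarantees the solution of \eqref{qssredsys} cannot reach $\partial V_\pi$ before $T$ (giving (ii)), and then choosing $\delta\in(0,\delta_1]$ with $\frac{\delta}{L}(e^{LT}-1)<\epsilon$ yields (iii).

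For part (b), the strategy is a direct two-term Taylor/integral estimate rather than an asymptotic one. I would write both solutions via the integral form $x(t)=y+\int_0^t(\text{field})\,ds$ and note that for $0\leq t\leq t^*=\rho/(2LR)$ each solution stays within distance $R t^*=\rho/(2L)\leq\rho_0/(2L)$ of $y$, so both remain in $\overline{B_{\rho_0/2L}(y)}\times\{\pi\}\subseteq K^*$, where the bounds $R$ and $L$ are valid. The difference of the two solutions $d(t):=x(t)-x_{\mathrm{red}}(t)$ satisfies $d(0)=0$ and
\[
d(t)=\int_0^t\big(h(x(s),\pi)-h_{\mathrm{red}}(x_{\mathrm{red}}(s),\pi)\big)\,ds.
\]
Splitting the integrand as $\big(h(y,\pi)-h_{\mathrm{red}}(y,\pi)\big)$ plus two error terms controlled by the Lipschitz constant $L$ and the displacement $\leq Rs$, I would bound the ``main'' contribution from below by $\Vert h(y,\pi)-h_{\mathrm{red}}(y,\pi)\Vert\,t\geq 2\rho t$ and the error contribution from above by $2L\int_0^t Rs\,ds=LRt^2$. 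At $t=t^*$ this gives $\Vert d(t^*)\Vert\geq 2\rho t^*-LR(t^*)^2$, and substituting $t^*=\rho/(2LR)$ reduces this to $\rho^2/(LR)-\rho^2/(4LR)\geq\rho^2/(2LR)$, the claimed lower bound.

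The main obstacle in part (a) is bookkeeping rather than conceptual: one must order the choices of $A_\pi$, $\delta_1$, and $\delta$ carefully so that each successive solution is guaranteed to stay in the region $V_\pi$ where the bounds apply before the next estimate is invoked, since the Lipschitz and boundedness hypotheses are only assumed on $K^*$. In part (b) the delicate point is ensuring that both trajectories genuinely remain inside $\overline{B_{\rho_0/2L}(y)}$ for the whole interval $[0,t^*]$, so that the constant $L$ may legitimately be used in the error estimate; this follows from the speed bound $R$ together with the definition of $t^*$, but it must be checked \emph{before} the Lipschitz bound is applied, to avoid a circularity.
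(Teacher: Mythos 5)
Your proof is correct in substance, and for part (a) it is essentially the paper's argument: the paper simply cites the standard continuous-dependence theorem (Walter, \S 12 VI), which is exactly the Gr\"onwall-type estimate $\Vert u(t)-w(t)\Vert\leq\frac{\delta}{L}(e^{Lt}-1)$ that you invoke, together with the same bookkeeping about keeping solutions inside the region where the bounds hold. For part (b) your computation differs mildly from the paper's (Lemma~\ref{nonclose} in the Appendix): the paper fixes a coordinate $i$ with $\Vert h(y,\pi)-h_{\rm red}(y,\pi)\Vert=|h_i(y,\pi)-h_{{\rm red},i}(y,\pi)|\geq 2\rho$, proves via a mean-value estimate on $H(x_1,x_2):=h(x_1,\pi)-h_{\rm red}(x_2,\pi)$ that $|H_i(x_1,x_2)|\geq\rho$ \emph{uniformly} on the ball of radius $\rho/(2L)$, and then integrates this pointwise lower bound to get linear growth $\geq\rho t$, hence exactly $\rho^2/(2LR)$ at $t^*$. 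You instead freeze the integrand at $y$ and bound the deviation quadratically, obtaining $\Vert d(t^*)\Vert\geq 2\rho t^*-LR(t^*)^2=3\rho^2/(4LR)$, which is slightly stronger than the claimed bound. Both routes are elementary, rest on the same idea (initial-value freezing plus Lipschitz control of the displacement $\leq Rs$), and both correctly verify containment in $\overline{B_{\rho_0/2L}(y)}\times\{\pi\}\subseteq K^*$ before using $L$ and $R$.

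One side remark in your part (a) is genuinely wrong, however, and you should excise it: you claim that $\Vert h-h_{\rm red}\Vert$ is small on $V_\pi$ ``precisely because, by construction of $h_{\rm red}$, the two fields agree exactly on the QSS variety $U_\pi$.'' They do not, in general. On $U_\pi$ one has $h^{[2]}=0$, so $h-h_{\rm red}$ there has second block equal to $D_2h^{[2]}(x,\pi)^{-1}D_1h^{[2]}(x,\pi)h^{[1]}(x,\pi)$, which vanishes on $U_\pi$ if and only if $U_\pi$ is invariant --- that is, if and only if $\pi$ is a QSS parameter value; this is exactly the content of Proposition~\ref{consexact}. If your remark were true, every admissible $\pi$ would be a QSS parameter value and the dichotomy that Proposition~\ref{consapprox} is designed to express (closeness of solutions near QSS parameter values in (a), a quantitative lower bound on the discrepancy away from them in (b)) would be empty. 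Fortunately the error is inessential to your proof: in items (ii) and (iii) the smallness $\Vert h-h_{\rm red}\Vert<\delta$ on $V_\pi$ is a \emph{hypothesis}, not something to be derived, so your conditional argument goes through unchanged once the spurious justification is deleted.
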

\begin{proof} Part (a)  is a direct consequence of e.g. Walter \cite{Walter}, \S12 VI; for part (b) see the Appendix, \ref{depresults}.
\end{proof}
\begin{corollary} Let $(y^*,\pi^*)\in U_{\pi^*}\times\{\pi^*\}$ be given such that $y^*$ lies in the open positive orthant. Let $V\subseteq\mathbb R^n$ be a neighborhood of $y^*$ with $\overline V\times\{\pi^*\}\subseteq K^*$, let $A\subseteq V$ be a compact neighborhood of $y^*$, let $B\subseteq \mathbb R^m$ be a compact neighborhood of $\pi^*$. Moreover let $T>0$ such that any solution of \eqref{sys} with initial value $z\in A$ and parameter $\pi\in B$ exists and is contained in $V$ for $0\leq t\leq T$.
Then the following are equivalent.
\begin{enumerate}[(a)]
\item For any positive integer $k$ and any $\delta >0$ there exists $\pi_k\in B$ with $\Vert \pi_k-\pi^*\Vert <\delta$ such that the solution of \eqref{qssredsys} with initial value $z\in U_{ \pi_k}\cap V$ exists and remains in $V$ for $0\leq t\leq T$, and its difference to the solution of \eqref{sys} with the same initial value has norm less than $1/k$ for all $t\in \left[0,T\right]$.
\item $\pi^*$ is a QSS parameter value.
\end{enumerate}
\end{corollary}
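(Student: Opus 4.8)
The plan is to establish the equivalence through its two implications, using the exact characterization of accuracy from Proposition \ref{consexact} for the direction (b) $\Rightarrow$ (a), and the quantitative lower bound of Proposition \ref{consapprox}(b) for the converse, which I would handle in contrapositive form. The role of the two index quantities in (a) is worth keeping in mind: (a) asserts $\forall k\,\forall\delta\,\exists\pi_k$, so its negation is $\exists k\,\exists\delta$ such that for \emph{every} admissible $\pi$ the approximation is broken by some initial value.

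For (b) $\Rightarrow$ (a) the argument is immediate. If $\pi^*$ is a QSS parameter value then $U_{\pi^*}$ is invariant for \eqref{sys}, so Proposition \ref{consexact} gives that the solutions of \eqref{sys} and of \eqref{qssredsys} with any initial value $z\in U_{\pi^*}\cap A$ coincide identically on $[0,T]$. Choosing $\pi_k:=\pi^*$ for every $k$ (which trivially satisfies $\Vert\pi_k-\pi^*\Vert<\delta$), the solution of \eqref{qssredsys} equals that of \eqref{sys}, hence remains in $V$ by the standing hypothesis on $A$, $B$, $T$, and their difference is zero, a fortiori $<1/k$. Thus (a) holds.

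For (a) $\Rightarrow$ (b) I would argue the contrapositive and produce a fixed $k$ and $\delta$ that defeat (a). Assume $\pi^*$ is not a QSS parameter value; then $U_{\pi^*}$ is not invariant, so by the proof of Proposition \ref{consexact} the defect $h-h_{\rm red}$, whose $x^{[2]}$-component equals $D_2h^{[2]}(y,\pi)^{-1}D_1h^{[2]}(y,\pi)\,h^{[1]}(y,\pi)$ on $U_\pi$, does not vanish identically on $U_{\pi^*}$. Since $U_{\pi^*}$ is \emph{irreducible}, the zero set of this polynomial defect is a proper, hence lower-dimensional, subvariety, so its complement is dense in $U_{\pi^*}$; I can therefore pick $y_0\in U_{\pi^*}$ in the open positive orthant, arbitrarily close to $y^*$, with $\Vert h(y_0,\pi^*)-h_{\rm red}(y_0,\pi^*)\Vert>0$. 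Continuity of $h$, of $h_{\rm red}$, and of the implicit-function graph $\Psi$ in the parameter then yields a $\delta>0$ and a uniform $\rho>0$ such that for each $\pi$ with $\Vert\pi-\pi^*\Vert<\delta$ there is a point $y_\pi\in U_\pi\cap V$ near $y_0$ with $\Vert h(y_\pi,\pi)-h_{\rm red}(y_\pi,\pi)\Vert\geq 2\rho$; shrinking $\rho$ if needed I can also arrange $t^*:=\rho/(2LR)\leq T$ and $\overline{B_{\rho_0/2L}(y_\pi)}\times\{\pi\}\subseteq K^*$. Proposition \ref{consapprox}(b) then forces the solutions of \eqref{sys} and \eqref{qssredsys} issued from $y_\pi$ to differ by at least $\rho^2/(2LR)$ at time $t^*\in[0,T]$. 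This lower bound is independent of $\pi$, so fixing any $k$ with $1/k<\rho^2/(2LR)$ shows that for every admissible $\pi$ the initial value $z=y_\pi$ violates the approximation demanded in (a); hence (a) fails.

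The hard part will be the contrapositive, and within it the passage from the qualitative statement ``$U_{\pi^*}$ is not invariant'' to a \emph{uniform}, $\pi$-independent lower bound on the solution defect near $y^*$. Two points need care: first, invoking irreducibility of $U_{\pi^*}$ to guarantee that defect points exist arbitrarily close to the prescribed $y^*$, rather than merely somewhere on the variety, which would be useless for the localized statement of the corollary; and second, the uniform control in $\pi$, which requires that the moving variety $U_\pi$ and the defect depend continuously on $\pi$ so that a single $\rho>0$ and a single time $t^*\leq T$ serve for all $\pi$ in a full neighborhood of $\pi^*$. Once these are secured, Proposition \ref{consapprox}(b) performs the quantitative work and the choice of $k$ is immediate.
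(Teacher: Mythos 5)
Your proof is correct and takes essentially the same approach as the paper: the corollary is stated there without an explicit proof, being intended as an immediate consequence of Proposition \ref{consexact} (giving (b)$\Rightarrow$(a) with the trivial witness $\pi_k=\pi^*$) and of the lower bound in Proposition \ref{consapprox}(b) (giving (a)$\Rightarrow$(b) in contrapositive form), which is exactly how you argue. The details you supply — irreducibility of $U_{\pi^*}$ to find defect points arbitrarily close to $y^*$ (this is the paper's remark following Definition \ref{qsspvdef}), and continuity/implicit-function arguments to make the defect bound $2\rho$ and the time $t^*\leq T$ uniform in $\pi$ — are precisely the steps the paper leaves implicit.
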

\begin{example}We continue the example following Proposition \ref{consexact}, with the irreversible Michaelis-Menten system \eqref{mimeirrev}, and $\pi=(e_0, k_1, k_{-1}, k_2)$.
\begin{enumerate}[(a)]
\item A small perturbation of the QSS parameter value $\pi^*=(0,k_1,k_{-1}, k_2)$ for complex, with all $k_i>0$, yields a parameter value with small $e_0$, and $U_\pi$ is defined by
\[
k_1e_0s-(k_1s+k_{-1}+k_2)\cdot c=0;
\]
which is the familiar version of the QSS variety; at the QSS parameter value $\pi^*$ this degenerates into $c=0$.
Solving this for $c$ and substituting, one recovers the familiar one dimensional Michaelis-Menten equation, and approximate accuracy holds due to continuous dependence. \\
(We restricted attention to a small perturbation of a particular kind here, changing only the first entry of the parameter value from zero to a positive value and leaving the other entries -- which are assumed positive a priori -- unchanged. A more general perturbation would change the first entry from zero to some multiple of a small parameter $\varepsilon$, and also change the other entries by an order $\varepsilon$ term. The net result for the reduced equation (up to higher order terms in $\varepsilon$) would be unchanged. See also the corresponding discussion in \cite{gwz}, e.g. subsection 3.5. In subsequent examples we will take similar shortcuts.) 
\item Considering QSS for substrate $s$, we look at a small perturbation of $\pi^*=(e_0, k_1,0,k_2)$ with positive entries $e_0, k_1,k_2$, hence small $k_{-1}$. The QSS variety $U_\pi$ is defined by
\[
-k_1e_0s +(k_1s +k_{-1})c=0,
\]
and the reduced equation (after rewriting) is given by
\[
\dot c=-k_2c;
\]
again with approximate accuracy due to continuous dependence.
\end{enumerate}
\end{example}

Notions related to approximate invariance are not new in QSS discussions. Schauer and Heinrich \cite{hs-inv} proposed an argument of this type for the irreversible Michaelis-Menten system with QSS for complex. They argued that, to ensure approximate validity of the QSS reduction, the solution trajectory should remain close to the QSS variety defined by ``$\dot c=0$", and they obtained conditions on the parameters from this observation.
Their line of reasoning was later taken up (using somewhat different ``infinitesimal" conditions) and expanded in \cite{nw09}, as well as in \cite{cgw}, Section 4. \\
Essentially we argue in a similar manner in the present paper, but we reverse the argument. Instead of requiring a priori the (approximate) invariance of the manifold $U_\pi$, as Schauer and Heinrich did, we focus on the (approximate) accuracy of the classical QSS-reduction procedure which, after all, is the primary objective. Eventually, as we have seen, both requirements lead to the same conditions. (In contrast, in their definition of validity for QSS, Kollar and Siskova \cite{kosi} require a less restrictive invariance condition but a more restrictive convergence condition. Expressed in the terminology used in the present paper, they do not require invariance of $U_{\pi^*}$ but stability and exponential attractivity for all initial values on $U_{\pi^*}$.)
\begin{remark}\label{qssdefrem}
\begin{enumerate}[(a)]
\item For a QSS parameter value $\pi^*$ system \eqref{sys} admits, by definition, the invariant manifold $U_{\pi^*}$. But the existence of a nearby invariant manifold for systems \eqref{sys} with $\pi$ near $\pi^*$ is not guaranteed unless certain additional conditions hold (see e.g. Fenichel \cite{Feninv} and the CSPT approach by Lam and Goussis \cite{lamgo}). Below (see Subsection \ref{subs35} and Section \ref{secstructure}) we will consider cases where the existence of invariant manifolds is assured.
\item We did not (yet) refer to singular perturbations. These are highly relevant, but our focus in this section is on the minimal requirement for the classical QSS reduction procedure. In turn, this focus on a minimal requirement implies that some QSS parameter values may provide a poor approximation from a practical point of view. (One example was mentioned in Subsection \ref{subsecsubs}.) In Section \ref{secstructure} we will see how singular perturbation scenarios are frequently a natural consequence of QSS assumptions for reaction equations.
\item Moreover, we did not require attractivity of the QSS variety (or some other manifold), or invoke time scale arguments, which form the basis of Segel and Slemrod's work \cite{SSl}.
\end{enumerate}
\end{remark}
To summarize this subsection: It seems justified to investigate QSS reduction only in the neighborhood of QSS parameter values, and we will do so in the following. But by themselves QSS parameter values are just a necessary ingredient for application-relevant reduction, not a sufficient one. (At this point time-scale arguments may be useful when investigating relevance for applications.) On the plus side, QSS parameter values are amenable to algorithmic algebra (as will be seen next), and a case-by case analysis of the associated systems is possible.
%%%%%%%%%%%%%%%%%%%%%%%%%%%%%%%%%%%
\subsection{Computational issues}\label{compiss}

Given a parameter dependent reaction system, it is a typical and important question to ask for parameter values at which QSS takes place (see Schauer and Heinrich \cite{hs-inv}, Segel and Slemrod \cite{SSl}).
Therefore it is a welcome property of QSS parameter values that they can be characterized by algebraic means (polynomial equations and inequations) and computed with the help of algorithmic algebra, as was noticed in \cite{gwz2}. We present here the underlying reason why this works and sketch the path toward an algorithmic determination of QSS parameter values.\\
The crucial point is invariance of the QSS variety $U_{\pi^*}$ which is defined by $h_{r+1}=\cdots=h_n=0$. The invariance condition (locally) can be expressed via the existence of relations
\[
L_h(h_k)=\sum_\ell \mu_{k\ell}h_\ell\, ,\quad r+1\leq k\leq n
\]
with rational $\mu_{k\ell}$ that are defined at the point in question. (This just another way to express tangency of the vector field to the variety.)
The following Proposition builds on this observation; it is  a modification and extension of \cite{gwz2}, Proposition 5.
\begin{proposition}\label{adhoccrit} Let the polynomial  system \eqref{sys} be given, with notation and conditions as in Definitions \ref{qssbase} and \ref{QSSdef}; in particular let $\pi^*$ be a QSS parameter value and $(y^*,\,\pi^*)\in U_{\pi^*}$.
\begin{enumerate}[(a)]
\item Then $(y^*,\,\pi^*)$ is a common zero of $h_{r+1},\ldots,h_n$, their Lie derivatives $L_h(h_{r+1}),\ldots,L_h(h_n)$ and all $(n-r+1)\times(n-r+1)$ minors of the matrices
\[
 A_k:=\begin{pmatrix} Dh_{r+1}\\
                       \vdots\\
                      Dh_{n}\\
                       DL_h(h_k)
\end{pmatrix}, \quad r+1\leq k\leq n.
\]
(As before, $D$ denotes the derivative with respect to $x$.)
\item Conversely, if $(\widehat y,\,\widehat\pi)$ is a common zero of the polynomials above, and if the rank of $D_2h^{[2]}(\widehat y,\,\widehat\pi)$ is equal to $n-r$ then $\widehat \pi$ is a QSS parameter value.
\end{enumerate}
\end{proposition}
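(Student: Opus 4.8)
The plan is to treat the two directions separately, reading the minor conditions as the linearization, along the QSS variety, of the tangency relations that express invariance. For part (a) I would start from the hypothesis that $\pi^*$ is a QSS parameter value, so by Definition \ref{qsspvdef} the variety $U_{\pi^*}$ is invariant for \eqref{sys}. By the invariance criterion (Lemma \ref{invlem}) this is equivalent, locally near $(y^*,\pi^*)$, to the existence of rational functions $\mu_{k\ell}$ that are regular at the point and satisfy $L_h(h_k)=\sum_{\ell=r+1}^n \mu_{k\ell}\,h_\ell$ for every $r+1\le k\le n$. Evaluating this identity at $(y^*,\pi^*)$, where all $h_\ell$ vanish, immediately gives $L_h(h_k)(y^*,\pi^*)=0$, and $h_{r+1},\dots,h_n$ vanish there by definition. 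Differentiating the identity with respect to $x$ and evaluating again at $(y^*,\pi^*)$, the terms $(D\mu_{k\ell})h_\ell$ drop out because each $h_\ell=0$ there, leaving $DL_h(h_k)=\sum_\ell \mu_{k\ell}(y^*,\pi^*)\,Dh_\ell$. Hence the last row of $A_k$ is a linear combination of the first $n-r$ rows, so $A_k$ has rank at most $n-r$ at $(y^*,\pi^*)$ and all of its $(n-r+1)\times(n-r+1)$ minors vanish there.

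For the converse in part (b) I would aim to reverse this chain: recover the tangency relations from the rank data and then invoke Lemma \ref{invlem}. The rank hypothesis on $D_2h^{[2]}(\widehat y,\widehat\pi)$ forces the full Jacobian block $Dh^{[2]}$ to have rank $n-r$, so the gradients $Dh_{r+1},\dots,Dh_n$ are independent and, near $(\widehat y,\widehat\pi)$, the set $U_{\widehat\pi}$ is a smooth $r$-dimensional submanifold whose tangent space at a point is $\bigcap_\ell \ker Dh_\ell$. The vanishing of all $(n-r+1)$-minors of $A_k$ then says precisely that $DL_h(h_k)$ lies in the span of $Dh_{r+1},\dots,Dh_n$, i.e. that the gradient of $L_h(h_k)$ annihilates $TU_{\widehat\pi}$; equivalently, the differential of the restriction $L_h(h_k)|_{U_{\widehat\pi}}$ vanishes. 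Together with $L_h(h_k)(\widehat y,\widehat\pi)=0$ and the irreducibility (hence connectedness) of $U_{\widehat\pi}$, this would force $L_h(h_k)\equiv 0$ on $U_{\widehat\pi}$ for every $k$, and Lemma \ref{invlem} would then give invariance of $U_{\widehat\pi}$, so that $\widehat\pi$ is a QSS parameter value by Definition \ref{qsspvdef}.

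The hard part will be the propagation from a single point to the whole variety. The minor and Lie-derivative conditions are assumed only at $(\widehat y,\widehat\pi)$, whereas the constancy argument above requires $\mathrm{rank}\,A_k\le n-r$ along all of $U_{\widehat\pi}$: knowing that $DL_h(h_k)\in\mathrm{span}(Dh_\ell)$ and $L_h(h_k)=0$ at one point only says that $L_h(h_k)|_{U_{\widehat\pi}}$ has a critical zero there, which does not by itself make it vanish identically. I would close this gap by arguing that the minor conditions cut out a Zariski-closed subset of the irreducible variety $U_{\widehat\pi}$ and that this subset is forced to be all of $U_{\widehat\pi}$, using the polynomial nature of the relations together with the standing positivity and mass-action structure of \eqref{sys}; it is exactly here that the reaction-network hypotheses do the work, since for an unrestricted polynomial system the tangency can hold at isolated points only. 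An alternative, more algebraic route would construct the multipliers $\mu_{k\ell}$ as rational functions directly from the invertibility of $D_2h^{[2]}$ and verify the ideal membership $L_h(h_k)\in\langle h_{r+1},\dots,h_n\rangle$ on a neighborhood; the delicate step remains the same, namely upgrading a first-order coincidence at $(\widehat y,\widehat\pi)$ to an identity on $U_{\widehat\pi}$.
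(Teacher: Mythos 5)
Your part (a) is exactly the paper's argument and is correct: invariance gives rational multipliers $\mu_{k\ell}$ regular at $(y^*,\pi^*)$ via Lemma \ref{invlem}, and evaluating the differentiated identity at the point kills the terms $(D\mu_{k\ell})h_\ell$, so the last row of $A_k$ is a combination of the first $n-r$ rows. The problem is part (b). You reduce it to showing $L_h(h_k)\equiv 0$ on $U_{\widehat\pi}$, observe (correctly) that the hypotheses taken at the single point $(\widehat y,\widehat\pi)$ only say that $L_h(h_k)\vert_{U_{\widehat\pi}}$ has a critical zero there, and then leave the decisive step open, suggesting it might follow from positivity and mass-action structure. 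That suggestion cannot be made to work: Proposition \ref{adhoccrit} assumes nothing about \eqref{sys} beyond polynomiality, and the purely pointwise reading of (b) is in fact false even for mass-action systems. Take $n=2$, $r=1$, $h(x,\pi)=(1,\ x_2+x_1^3)^{\rm tr}$ (a mass-action network: constant inflow of $X_1$, together with $X_2\to 2X_2$ and $3X_1\to 3X_1+X_2$; the positive orthant is positively invariant). Then $L_h(h_2)=3x_1^2+x_2+x_1^3$, and the single $2\times 2$ minor of $A_2$ equals $3x_1^2-(6x_1+3x_1^2)=-6x_1$. At the origin, $h_2$, $L_h(h_2)$ and this minor all vanish, and $D_2h^{[2]}=1$ has full rank $n-r=1$; yet on the curve $x_2=-x_1^3$ one has $L_h(h_2)=3x_1^2\not\equiv 0$, so the QSS variety is not invariant and $\widehat\pi$ is not a QSS parameter value. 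Hence no argument, however structured, can close the gap from the single-point hypotheses alone.

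The paper's own proof resolves this by using the hypotheses of (b) in a stronger, along-the-variety sense. Its key ``Fact'' is: if $\theta_1,\ldots,\theta_s$ are analytic with everywhere full-rank Jacobian, $\psi$ is analytic, and
\[
{\rm rank}\begin{pmatrix}D\theta_1(x)\\ \vdots \\ D\theta_s(x)\\ D\psi(x)\end{pmatrix}=s\ \text{ at every point }x\text{ of the common zero set }\widetilde Z\text{ of the }\theta_i,
\]
then locally $\psi=\alpha+\sum_i\mu_i\theta_i$ with analytic $\mu_i$; this is proved by normalizing $\theta_i=x_i$ and inspecting the Taylor expansion of $\psi$. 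Applying it with $\theta_i=h_{r+i}(\cdot,\widehat\pi)$, $\psi=L_h(h_k)(\cdot,\widehat\pi)$, and $\alpha=L_h(h_k)(\widehat y,\widehat\pi)=0$ produces the multiplier relations, and Lemma \ref{invlem} then yields invariance of $U_{\widehat\pi}$. In other words, the vanishing of the minors must be invoked at every point of $U_{\widehat\pi}$ near $\widehat y$ --- which is what the elimination-ideal computations deliver in practice --- and not merely at $(\widehat y,\widehat\pi)$. With that reading, your ``vanishing differential on a connected set forces constancy'' argument is precisely the paper's Fact and the rest of your outline goes through; without it, the statement itself fails, as the example above shows. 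So your instinct about where the difficulty lies was exactly right, but the remedy is the stronger interpretation of the hypotheses, not additional reaction-network assumptions.
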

\begin{proof} The proof of part (a) is essentially as in \cite{gwz2}, Proposition 5. We give a sketch for the reader's convenience.
By the invariance criteria in Lemma \ref{invlem}, invariance of $U_{\pi^*}$ implies the existence of rational functions $\mu_{k\ell}$ which are regular at $(y^*,\pi^*)$ such that
\[
L_h(h_k)=\sum_\ell \mu_{k\ell}h_\ell\, ,\quad r+1\leq k\leq n, 
\]
and therefore $(y^*,\pi^*)$ is a common zero of the $h_k$ and the $L_h(h_k)$. Moreover this relation implies
\[
DL_h(h_k)=\sum_\ell(D \mu_{k\ell}h_\ell+ \mu_{k\ell}Dh_\ell)
\]
and
\[
DL_h(h_k)(y^*,\pi^*)=\sum_\ell \mu_{k\ell}(y^*,\pi^*)Dh_\ell(y^*,\pi^*),
\]
which shows that the matrix $A_k$ has rank $\leq n-r$.\\
To prove part (b) it suffices to show the existence of analytic functions $\nu_{k\ell}$ near $(\widehat y,\widehat \pi)$ such that 
\[
L_h(h_k)=\sum_\ell \nu_{k\ell}h_\ell\, ,\quad r+1\leq k\leq n.
\]
This is an immediate consequence of the following\\
{\em Fact. Let $\widetilde W\subseteq \mathbb K^n$ open, $z\in\widetilde W$ and moreover $s<n$ and $\theta_1,\ldots,\theta_s$ analytic on $\widetilde W$, with Jacobian of rank $s$ throughout, and $\theta_1(z)=\cdots=\theta_s(z)=0$. Denote by $\widetilde Z$ the common zero set of the $\theta_i$. If $\psi$ is analytic on $\widetilde W$ and 
\[
{\rm rank}\begin{pmatrix}D\theta_1(x)\\ \vdots\\ D\theta_s(x)\\D\psi(x)\end{pmatrix} =s \text{  for all  }x\in \widetilde Z
\]
then there exists a neighborhood of $z$, $\alpha\in\mathbb K$ and analytic functions $\mu_i$ such that
\[
\psi=\alpha+\sum_{i=1}^s\mu_i\theta_i
\]
(In particular, $\psi$ is constant on $\widetilde Z$.)}\\
To prove this claim we may assume that $z=0$ and $\theta_i=x_i$ for $1\leq i\leq s$. Then the condition on the Jacobian is equivalent to
\[
\frac{\partial\psi}{\partial x_1}(x)=\cdots=\frac{\partial\psi}{\partial x_s}(x)=0\text{  for all  }x\in \widetilde Z.
\]
Given the Taylor expansion 
\[
\psi=\sum\alpha_{i_1,\ldots,i_n}x_1^{i_1}\cdots x_n^{i_n}
\]
this implies that $\alpha_{i_1,\ldots,i_n}=0$ whenever $i_1+\cdots +i_n>0$ and $i_1+\cdots +i_s=0$. Hence nonconstant monomials  with nonzero coefficients are multiples of some $x_i$ with $1\leq i\leq s$. The claim follows. (For the smooth case one obtains a proof by invoking a theorem of Hadamard.)

\end{proof}
\begin{definition} If $(\widehat y,\,\widehat\pi)$ is a common zero of the polynomials in Proposition \ref{adhoccrit} (not necessarily satisfying any rank condition) then we call $\widehat \pi$ a {\em QSS-critical parameter value}. 
\end{definition}

The applicability of Proposition \ref{adhoccrit} for the computation of QSS(-critical) parameter values is intuitively clear: Fix $j$ with  $r+1\leq j\leq n$. Then the points $(y,\pi^*)$ of $U_{\pi^*}$ satisfy the  $n-r+1$ equations $h_{r+1}=\cdots =h_{n}=L_h(h_j)=0$, and moreover the determinant conditions involving $L_h(h_j)$, of which there are at least $r$. One therefore has an overdetermined system of at least $n+1$ equations for the $n$ entries of $y$, which one expects to admit a solution only for certain parameter values. In turn, this fact can frequently be used to determine QSS-critical parameter values, and standard algorithms in computational algebra (employing elimination ideals) are applicable (see \cite{gwz2}, in particular Section 7, for more details). 
\begin{example} Write the irreversible Michaelis-Menten system \eqref{mimeirrev} as $\dot x=h(x,\pi)$. 
To find QSS-critical parameter values for substrate $s$, consider 
\[
\theta:=h_1=L_h(s),\quad L_h(\theta)=-(k_1(e_0-c)+k_1s+k_{-1})\theta-(k_1s+k_{-1})k_2c
\]
and their Jacobian determinant. A computation (using the ideal generated by these three polynomials and standard software) similar to \cite{gwz2}, Example 4 shows that any QSS-critical parameter value
$\pi^*=(e_0^*,k_1^*,k_2^*,k_{-1}^*)$ must have (at least) one entry $0$. Here an advantage of the ``classical'' approach becomes apparent: Focussing on QSS parameter values yields a complete list of candidates for application-relevant QSS reduction.
\end{example}
From an algebraic perspective it is natural to consider not only the polynomials listed in part (a) of the Proposition but rather the ideal $J\subseteq \mathbb R[x,\,\pi]$ generated by these polynomials; see more about this in the Appendix, \ref{algoapp}. 

%%%%%%%%%%%%%%%%%%%%%%%%%%%
\subsection{An intermediate resum\'e}\label{subs35}
We now take a closer look at the QSS reduction near a QSS parameter value, and once more investigate the accuracy of the approximation. Proposition \ref{consapprox} relies on standard continuous dependency results, but this may be too weak for some systems.\\
 To illustrate the possible problem, fix a parameter value $\pi$ and a QSS parameter value $\pi^*$, write $\rho:=\pi-\pi^*$ and consider Taylor expansions of $h(x,\pi^*+\delta\rho)$ and $h_{\rm red}(x,\pi^*+\delta\rho)$ up to first order in $\delta$. With the abbreviations
\[
h(x,\pi^*)=h_0(x),\quad h(x,\pi)=h_0(x)+\delta h_1(x)+\cdots,
\]
and similar expansions for $h^{[1]}$ and $h^{[2]}$, the QSS reduction up to first order in $\delta$ is given by
\begin{equation}\label{qssredsysdel}
\begin{array}{rcl}
\dot x^{[1]}&=& h_0^{[1]}(x)+\delta h_1^{[1]}(x)+\cdots\\
\dot x^{[2]}&=& -D_2h_0^{[2]}(x)^{-1}D_1h_0^{[2]}(x)h_0^{[1]}(x) +\delta q(x)+\cdots
\end{array}
\end{equation}
with 
\[
\begin{array}{cr}
 q(x)=& \left(D_2h_0^{[2]}(x)\right)^{-1}D_2h_1^{[2]}(x)\left(D_2h_0^{[2]}(x)\right)^{-1}D_1h_0^{[2]}(x)h_0^{[1]}(x)\\
  & -\left(D_2h_0^{[2]}(x)\right)^{-1}D_1h_1^{[2]}(x)h_0^{[1]}(x)\\
 &  -\left(D_2h_0^{[2]}(x)\right)^{-1}D_1h_0^{[2]}(x)h_1^{[1]}(x)
\end{array}
\]
This reduction is robust with respect to changes in the ``small parameter" $\delta$ if $ h_0$ has only isolated zeros on $U_{\pi^*}$ and the stationary points of \eqref{sys} on $U_{\pi^*}$ are hyperbolic. (For instance, near a nonstationary point on $U_{\pi^*}$, a local parameterization and a flow-box argument show that there is a local invariant manifold of dimension $r$ for $\delta$ near $0$ and that this invariant manifold is close to $U_{\pi^*}$.)\\
Matters may be different in the 
{\em singular setting} (following the terminology in Fenichel \cite{fenichel}), when $ h_0$ has non-isolated zeros on $U_{\pi^*}$. 
For the purpose of illustration we just consider the {\em fully singular setting} here: When $h_0$ vanishes on $U_{\pi^*}$ then we have the QSS reduction
\begin{equation}\label{qssredsysing}
\begin{array}{rcl}
\dot x^{[1]}&=& \delta h_1^{[1]}(x)+\cdots\\
\dot x^{[2]}&=& -\delta D_2h_0^{[2]}(x)^{-1}D_1h_0^{[2]}(x)h_1^{[1]}(x) +\cdots,
\end{array}
\end{equation}
and the expansion of $h$ for any point on $U_{\pi^*}$ also starts with terms of order $\delta$. Since Proposition \ref{consapprox}(a) guarantees a correct approximation only up to errors of order $\delta$,  the QSS reduction may become unreliable here. (For a clear description of the underlying problem see Stiefenhofer \cite{sti}, p.~595ff.) 
\begin{example}
Consider the irreversible Michaelis-Menten equation \eqref{mimeirrev} with slow product formation (i.e., small parameter $k_2$). Thus $\pi^*:=(e_0,k_1, k_{-1}, 0)^{\rm tr}$ with positive $e_0,k_1,k_{-1}$ is a QSS parameter value for complex concentration $c$, and we set $\rho:=(0,0,0,1)^{\rm tr}$ and $\delta:=k_2$ in accordance with the notation above. This scenario also admits a singular perturbation (Tikhonov-Fenichel) reduction with small parameter $k_2$, and it is known (see \cite{GSWZ1}, 3.1  and \cite{gw}, Example 8.6) that the reduced equation (after rewriting as a one-dimensional system) is given by
\[
\dot s=-\frac{k_2k_1e_0s}{k_1k_{-1}e_0/(k_1s+k_{-1})+(k_1s+k_{-1})},
\]
with convergence guaranteed by Tikhonov's theorem. On the other hand, classical QSS reduction for complex yields
\[
\dot{s}=-\frac{k_2k_1e_0s}{k_1s+k_{-1}+k_2}=-\frac{k_2k_1e_0s}{k_1s+k_{-1}}+\cdots
\]
(up to higher order in $k_2$). In the slow time scale $\tau=k_2t$ one has
\[
s^\prime=-\frac{k_1e_0s}{k_1k_{-1}e_0/(k_1s+k_{-1})+(k_1s+k_{-1})}\text{  versus   }s^\prime=-\frac{k_1e_0s}{k_1s+k_{-1}}.
\]
Since we excluded the case that  $k_{-1}e_0$ is also of order $\delta$, the  QSS reduction procedure yields an incorrect result, predicting too slow decay of substrate. One can verify this in numerical experiments, but one has to be mindful that the reduction should be expected to be valid only on the QSS variety $U_{\pi^*}$, which is defined by $k_1e_0s+(k_1s+k_{-1})c=0$ and coincides with the asymptotic slow manifold. Therefore one has to choose starting values accordingly. (If one wishes to investigate system \eqref{mimeirrev} with the usual initial value $(s_0,0)$ then one has to consider the fast time scale first and determine an appropriate starting value on the slow manifold; see \cite{gw}, subsection 2.3. With the incorrect starting value $s_0$ both reductions will provide bad approximations.)\\
One may note here that the case of small $k_2$ is in fact involving slow and fast reactions, thus properly belongs into the realm of partial equilibrium approximation (PEA). But considering the mathematical side, such parameters are close to a QSS parameter value, and therefore they should be discussed in the QSS context if only to show that QSS reduction is inappropriate,  and singular perturbation reduction is appropriate.
\end{example}

%%%%%%%%%%%%%%%%%%%%%%%%%%%%%%%%%%%%%%%%%%%%%%%%
\section{Structure and singular perturbations}\label{secstructure}
In many applications, classical QSS assumptions lead to singular perturbation scenarios, although there seems to be no a priori reason for this. In the present section we will provide some evidence that QSS assumptions {\em for reaction networks} naturally  lead to singular perturbation settings. The underlying reason is that invariance of the QSS variety implies the existence of further invariant varieties, with a possible exception when this variety is an affine subspace defined by $x_i={\rm const}$ for all QSS species $x_i$. (We call such varieties affine coordinate subspaces.) Since many reaction equations have the property that every forward invariant set in the positive orthant contains a stationary point, one automatically arrives at a singular scenario with non-isolated stationary points whenever the QSS variety is not an affine coordinate subspace for the QSS species.\\
 Assuming the conditions which guarantee Tikhonov-Fenichel reductions, we proceed to compare these to QSS reductions. It turns out that they do not match in general (which makes such QSS reductions questionable), but they do match up to first order when the QSS variety is an affine coordinate subspace. This provides an explanation why QSS reduction works well for Michaelis-Menten with small enzyme concentration.\\
As a further application we show that a QSS reduction (up to first order in a small parameter) can be computed explicitly, and agrees with singular perturbation reduction, even in cases when the algebraic obstacles to explicitly solving $h^{[2]}=0$ are insurmountable.
%%%%%%%%%%%%%%%%%%%%%%%%%%%%%%%%%%%%%%%%%%%%%%%%%
\subsection{The structure of the QSS variety}
Throughout this subsection let the situation of Definition \ref{qssbase} be given and assume that the hypotheses of Proposition \ref{adhoccrit} hold. Thus $\pi^*$ is a QSS parameter value, and the QSS variety $U_{\pi^*}$ is invariant. Due to the particular build of the QSS variety, we will find that there exist further invariant varieties. We introduce some convenient notation first.
\begin{definition}\label{affinedef}
\begin{enumerate}[(i)]
\item Given $\gamma:=(\gamma_{r+1},\ldots,\gamma_n)\in\mathbb R^{n-r}$, let
\begin{equation}\label{zdef}
\begin{array}{rcl}
\psi_{j, \gamma}(x)=\psi_j(x)&:=&x_j-\gamma_j,\quad r+1\leq j\leq n, \text{  and}\\ 
Z_\gamma&:=&\left\{y;\, \psi_{r+1}(y)=\cdots=\psi_n(y)=0\right\}.
\end{array}
\end{equation}
We call $Z_\gamma$ an {\em affine coordinate subspace.}
\item We say that system \eqref{sys} {\em admits a QSS reduction to an affine coordinate subspace} if $U_{\pi^*}\subseteq Z_{\gamma^*}$ for some $\gamma^*\in \mathbb R^{n-r}$.
\end{enumerate}
\end{definition}
We note that $Z_{\gamma^*}$ is invariant for \eqref{sys} whenever (ii) of Definition \ref{affinedef} holds, because $U_{\pi^*}$ is open in   $Z_{\gamma^*}$ and the Zariski closure of an invariant set of \eqref{sys} is invariant.\\
The intersections of a QSS variety with corresponding affine coordinate subspaces are again invariant; this fact explains their relevance:
\begin{proposition}\label{zsetsprop}
For every $j$, $r+1\leq j\leq n$ the set 
\[
U_{\pi^*}\cap\left\{y;\,\psi_{j, \gamma}(y)=0\right\}
\]
is invariant for $\dot x=h(x,\pi^*)$. In particular, 
for every $\gamma=(\gamma_{r+1},\ldots,\gamma_n)$ the set $U_{\pi^*}\cap Z_\gamma$ is invariant for system \eqref{sys} with $\pi=\pi^*$.
\end{proposition}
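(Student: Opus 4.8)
The plan is to exploit the defining feature of the QSS variety, namely that the rates of change of the QSS species vanish on it. By Definition \ref{qssbase} one has $U_{\pi^*}\subseteq Y_{\pi^*}$, so that $h_j(x,\pi^*)=0$ for every $x\in U_{\pi^*}$ and every $j$ with $r+1\leq j\leq n$. Consequently, along any solution of $\dot x=h(x,\pi^*)$ that stays in $U_{\pi^*}$, each coordinate $x_j$ with $j\geq r+1$ is conserved, since $\dot x_j=h_j=0$ there. This conservation property is the whole content of the Proposition.

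First I would invoke the hypothesis that $\pi^*$ is a QSS parameter value, which by Definition \ref{qsspvdef} means precisely that $U_{\pi^*}$ is invariant for $\dot x=h(x,\pi^*)$. Fix $j$ with $r+1\leq j\leq n$ and take an initial point $y\in U_{\pi^*}\cap\{\psi_{j,\gamma}=0\}$. The solution $x(t)$ through $y$ remains in $U_{\pi^*}$ throughout its interval of existence; hence $x(t)\in Y_{\pi^*}$, so $\dot x_j(t)=h_j(x(t),\pi^*)=0$, and integration gives $x_j(t)=x_j(0)=\gamma_j$ for all $t$. Thus $x(t)\in U_{\pi^*}\cap\{\psi_{j,\gamma}=0\}$, which establishes invariance of this set. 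The ``in particular'' assertion then follows by running the same conservation argument simultaneously for all $j$ from $r+1$ to $n$, since $Z_\gamma=\bigcap_{j=r+1}^{n}\{\psi_{j,\gamma}=0\}$.

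Alternatively, to phrase the argument through the paper's machinery, I would verify the tangency condition of Lemma \ref{invlem} directly. The set $U_{\pi^*}\cap\{\psi_{j,\gamma}=0\}$ is cut out by $h_{r+1},\ldots,h_n$ together with $\psi_{j,\gamma}$. Invariance of $U_{\pi^*}$ already supplies relations $L_h(h_k)=\sum_\ell \mu_{k\ell}h_\ell$ for $r+1\leq k\leq n$, while the additional defining function satisfies $L_h(\psi_{j,\gamma})=h_j$, because $D\psi_{j,\gamma}$ is the $j$-th unit row vector and $L_h(\theta)=D\theta\,h$. Since $h_j$ is itself among the defining functions, the Lie derivatives of all generators lie in the ideal they generate, which is exactly the criterion demanded by Lemma \ref{invlem}.

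I expect no genuine obstacle here; the argument is routine. The single point requiring care is the logical order: one must use invariance of $U_{\pi^*}$ \emph{before} concluding that $x_j$ is conserved, because the identity $\dot x_j=0$ holds only on $U_{\pi^*}$ and not on all of $\mathbb R^n$. Once this ordering is respected, both the single-coordinate statement and the full statement about $Z_\gamma$ follow at once.
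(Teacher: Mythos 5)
Your proposal is correct, and in fact it contains two complete arguments. Your ``alternative'' second argument is precisely the paper's own proof: the paper notes that invariance of $U_{\pi^*}$ supplies relations $L_h(h_i)=\sum_k \mu_{ik}h_k$ (via Lemma \ref{invlem}), observes $L_h(\psi_{j,\gamma})=h_j$, and concludes invariance of the intersection from Lemma \ref{invlem}(a), exactly as you do. Your primary argument is a genuinely more elementary route that the paper does not take: since $U_{\pi^*}\subseteq Y_{\pi^*}$ forces $h_j=0$ on $U_{\pi^*}$ for $j\geq r+1$, and invariance of $U_{\pi^*}$ keeps the solution inside this set, each QSS coordinate $x_j$ is literally conserved along such solutions, so the intersection with any level set $\psi_{j,\gamma}=0$ is invariant. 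This trajectory-based argument has two advantages: it needs no regularity bookkeeping for the cofactors $\mu_{jk}$ (which in the paper are only rational functions regular on an open-dense subset of $U_{\pi^*}$, a point the paper glosses over when feeding them back into Lemma \ref{invlem}(a)), and it makes transparent the statement in Corollary \ref{distinction}(a) that $x_j$ is a first integral of the restricted flow. What the Lie-derivative formulation buys in exchange is an algebraic certificate of invariance expressed by polynomial identities, which is the currency of the paper's algorithmic program (Proposition \ref{adhoccrit} and the elimination-ideal computations). Your closing remark about logical order --- invoking invariance of $U_{\pi^*}$ \emph{before} concluding $\dot x_j=0$, since $h_j$ vanishes only on $U_{\pi^*}$ --- is exactly the point where a careless version of the elementary argument would fail, and you handle it correctly; the only cosmetic omission is the paper's remark that the empty intersection case is trivial, which your argument covers vacuously.
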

\begin{proof} If the intersection is empty then there is nothing to prove. Otherwise, due to the invariance of $U_{\pi^*}$ there exist rational functions $\mu_{jk}=\mu_{jk}(x)$ which are regular on an open-dense subset of $U_{\pi^*}$ such that 
\[
L_h(h_i)=\sum_k\mu_{ik}h_k,\quad r+1\leq i\leq n;
\]
see Appendix \ref{invapp}, Lemma \ref{invlem}. By definition one has
\[
L_h(\psi_{j, \gamma})= h_j,\quad r+1\leq j\leq n.
\]
Taking these conditions together, Lemma \ref{invlem} shows the assertions.
\end{proof}
\begin{example}
Consider again the irreversible Michaelis-Menten equation \eqref{mimeirrev} with slow product formation (small parameter $k_2$) and QSS parameter value $\pi^*:=(e_0,k_1, k_{-1}, 0)^{\rm tr}$ for complex concentration. The QSS variety $U_{\pi^*}$ is then given by the equation
\[
k_1e_0s-(k_1s+k_{-1})c=0.
\]
By Proposition \ref{zsetsprop} each intersection with a level set $c=\gamma\geq 0$ is a point (necessarily stationary), and we have a singular scenario.
\end{example}
We note some consequences.
\begin{corollary}\label{distinction} Let $\pi^*$ be a QSS parameter value of system \eqref{sys}.
\begin{enumerate}[(a)]
\item Let $j$ be fixed. If $U_{\pi^*}\cap\left\{y;\,y_j-\gamma_j=0\right\}\not=\emptyset$ for more than one $\gamma_j$ then $x_{j}$ is a first integral for the restriction of \eqref{sys} to $U_{\pi^*}$; i.e. the intersections with all level sets have smaller dimension than $r$ and they are invariant for \eqref{sys}. Otherwise $U_{\pi^*}$ is contained in some hyperplane $\{x;\,x_j=\gamma_j^*\}$ for a unique $\gamma_j^*$.
\item  If the rank of the Jacobian of $(\psi_{r+1},\ldots,\psi_n,h_{r+1},\ldots,h_n)$ equals  $n$ at one point of $U_{\pi^*}$ then every point of $U_{\pi^*}$ is stationary.
\item If  $U_{\pi^*}$ is a curve then $U_{\pi^*}$  is open-dense in a coordinate subspace (thus all but one of the $x_i$ is constant), or every point of this curve is stationary.
\end{enumerate}
\end{corollary}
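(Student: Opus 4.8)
The plan is to derive all three parts from Proposition \ref{zsetsprop} together with two elementary observations about the irreducible $r$-dimensional variety $U_{\pi^*}$: first, that $h^{[2]}=0$ on $U_{\pi^*}$ (since $U_{\pi^*}\subseteq Y_{\pi^*}$, the zero set of $h^{[2]}$), and second, that $L_h(h_i)=0$ on $U_{\pi^*}$ for $r+1\le i\le n$ (because invariance gives $L_h(h_i)=\sum_k\mu_{ik}h_k$, which vanishes wherever all $h_k$ do). I would also use throughout the standard fact that a nonzero regular function on an irreducible variety has a zero set of strictly smaller dimension, and that the locus of maximal Jacobian rank is Zariski-open.

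For part (a), I regard $x_j$ as a regular function on $U_{\pi^*}$. If it takes more than one value, then for each value $\gamma_j$ in its image the set $U_{\pi^*}\cap\{x_j=\gamma_j\}$ is the zero set of the nonzero regular function $x_j-\gamma_j$ on the irreducible $U_{\pi^*}$, hence a proper closed subvariety of dimension $<r$; by Proposition \ref{zsetsprop} it is invariant. That $x_j$ is a first integral of the restricted flow is then immediate, since $L_h(x_j)=h_j=0$ on $U_{\pi^*}$. If instead $x_j$ takes a single value $\gamma_j^*$, then by definition $U_{\pi^*}\subseteq\{x_j=\gamma_j^*\}$, giving the alternative.

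For part (b), the key step is to show that at every $p\in U_{\pi^*}$ the velocity $h(p,\pi^*)$ lies in the kernel of the Jacobian $J(p)$ of $(\psi_{r+1},\dots,\psi_n,h_{r+1},\dots,h_n)$. This follows from the two observations above: $D\psi_j(p)\,h(p)=h_j(p)=0$ and $Dh_i(p)\,h(p)=L_h(h_i)(p)=0$ for $r+1\le i,j\le n$. Consequently, if $J$ has rank $n$ at some point $p_0$, its kernel is trivial there and $h(p_0,\pi^*)=0$. Since the locus where $\operatorname{rank}J=n$ is Zariski-open (nonvanishing of an $n\times n$ minor) and nonempty, it is dense in the irreducible $U_{\pi^*}$; as $h$ vanishes on this dense set and is continuous, $h\equiv 0$ on $U_{\pi^*}$, so every point is stationary.

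For part (c), I set $r=1$ and apply part (a) to each coordinate $x_j$, $j\ge 2$. Either all of them are constant on $U_{\pi^*}$, in which case $U_{\pi^*}$ lies in the coordinate line $Z_{\gamma^*}$ and, having the same dimension $1$ and being irreducible, is open-dense in it; or some $x_{j_0}$ is nonconstant, in which case each nonempty level set $U_{\pi^*}\cap\{x_{j_0}=\gamma\}$ is invariant of dimension $<1$, hence a finite set of points, and an invariant finite set under a continuous flow consists of stationary points. Since these level sets cover $U_{\pi^*}$, every point of the curve is then stationary. The routine ingredients are standard algebraic geometry; the one step requiring genuine care is the kernel computation in part (b), where both $h^{[2]}=0$ and $L_h(h^{[2]})=0$ on $U_{\pi^*}$ are needed to place the velocity in $\ker J$, and then the passage from stationarity at a single point to stationarity on the whole variety via irreducibility and continuity. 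I expect this propagation to be the main, if modest, obstacle.
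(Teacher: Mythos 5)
Your proposal is correct. Parts (a) and (c) follow essentially the paper's own route: irreducibility of $U_{\pi^*}$ forces the dimension drop of the level sets, Proposition \ref{zsetsprop} gives their invariance, and in (c) the zero-dimensional invariant level sets must consist of stationary points (this is precisely the paper's ``similar argument''). Part (b), however, takes a genuinely different route. The paper argues geometrically: full rank at one point gives full rank on a Zariski-open, dense subset of $U_{\pi^*}$; on that subset each $U_{\pi^*}\cap Z_\gamma$ is locally a single point, and since these intersections are invariant by Proposition \ref{zsetsprop}, those points are stationary. You instead argue by linear algebra: since $h_j=0$ on $U_{\pi^*}$ (for $j\geq r+1$) and $L_h(h_i)=0$ on $U_{\pi^*}$ (from invariance of $U_{\pi^*}$, via Lemma \ref{invlem}), the velocity $h(p,\pi^*)$ lies in $\ker J(p)$ at every $p\in U_{\pi^*}$, so at a full-rank point it must vanish. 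This bypasses Proposition \ref{zsetsprop} entirely in (b), replacing the local zero-dimensionality argument by injectivity of $J(p)$; it is arguably more direct, whereas the paper's version treats (b) and (c) uniformly by one mechanism. One small refinement to your final propagation step: ``continuity'' should be understood in the Zariski sense --- the full-rank locus is Zariski-open, hence Zariski-dense in the irreducible $U_{\pi^*}$, and the zero set of the polynomial $h(\cdot,\pi^*)$ restricted to $U_{\pi^*}$ is Zariski-closed, so it is all of $U_{\pi^*}$. Euclidean density of a nonempty Zariski-open subset can fail for real varieties (e.g.\ near isolated real points), so the Zariski formulation, which the paper also relies on implicitly through ``irreducibility,'' is the safe one; since you invoke Zariski-openness and irreducibility explicitly, this is a matter of phrasing rather than a gap.
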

\begin{proof} (a) By irreducibility, unless $U_{\pi^*}$ is contained in $\left\{y;\,\psi_j(y)=0\right\}$ for some $\gamma_j^*$, the dimension 
of the intersection is less than $r$.\\
As for part (b), full rank of the Jacobian in one point of $U_{\pi^*}\cap Z_{\gamma}$ implies full rank in an open and dense subset. In this subset, $U_{\pi^*}\cap Z_{\gamma}$ locally contains just single points, and by invariance these points must be stationary.  Due to irreducibility, every point of $U_{\pi^*}$ is stationary. Part (c) is proven by a similar argument.
\end{proof}
Part (c) of the Corollary is quite relevant for applications, which frequently consider reduction to dimension one. For reaction systems, we can place this observation in a broader context.
\begin{remark}
Proposition \ref{zsetsprop} seems to provide an explanation for the ubiquity of singular perturbations in QSS {\em for reaction systems}. Indeed, assume that $\pi^*$ is a QSS parameter value but the corresponding QSS variety is not open-dense in an affine coordinate subspace. Then there are infinitely many $\gamma$ such that  $U_{\pi^*}\cap Z_{\gamma}\not=\emptyset$. For many classes of reaction systems all physically relevant forward invariant sets contain a stationary point; therefore one may expect $U_{\pi^*}\cap Z_{\gamma}$ to contain a stationary point for infinitely many $\gamma$. 
\end{remark}
According to Fenichel \cite{fenichel}, one characteristic of singular perturbation settings is the existence of non-isolated stationary points. Therefore we define:
\begin{definition}\label{tfcritdef}
We call a parameter value $\widehat\pi$ {\em TF-critical} (or, at length, Tikho\-nov-Fenichel-critical) whenever $Y_{\widehat\pi}$ contains non-isolated stationary points. (In other words, $Y_{\widehat\pi}$ contains a positive dimensional subvariety of stationary points.)
\end{definition}
For instance, in the situation of part (b) of the above Corollary, $\pi^*$ is a TF-critical parameter value. The notion of TF-critical parameter value is a precursor to the notion of TF (Tikhonov-Fenichel) parameter value introduced in \cite{gwz} (see Appendix, \ref{sptredu} for more details). At TF parameter values the system admits a singular perturbation reduction according to Tikhonov's theorem. \\

Determining QSS parameter values which admit reduction to an affine coordinate subspace is less computationally involved than for general varieties; details are given in the Appendix, \ref{algoapp}.

%%%%%%%%%%%%%%%%%%%%%%%%%%%%%%%%%%%%%%%%%%%%%%%%%%%%%%
\subsection {Singular perturbation scenarios}\label{subsqssspt}
As a direct consequence of the definitions, Tikhonov-Fenichel-critical parameter values are also QSS parameter values (with respect to any set of variables). In turn, certain QSS-critical parameter values may be TF-critical by the observations in the previous subsection. \\
If some $\widehat \pi$ is actually a Tikhonov-Fenichel parameter value then, on the one hand, validity of the singular perturbation reduction is ensured. But on the other hand, this reduction (see Appendix, \ref{sptredu}, equation  \eqref{redform}) need not agree with the QSS reduction \eqref{qssredsys}, as shown by the example at the end of Section \ref{Formalsec}. In the present subsection we will show that under certain conditions (involving coordinate subspaces, most importantly) the two reduction methods yield essentially the same result.\\

There is another reason to give special attention to affine coordinate subspaces, from the perspective of applications and modelling. Indeed the notion of QSS should perhaps be reconsidered in a singular perturbation scenario. Given such a setting, one could argue that QSS then holds for {\em all species} in the reduced system on the asymptotic slow manifold (with respect to the original time scale). Indeed, all species change slowly following a short initial phase. But this argument seems to miss the point, since there is no longer a distinguished set of species in quasi-steady state. (A notable exception to this rule occurs, however,  when the QSS variety is an affine coordinate subspace.)\\

For illustration we look again at the example from the end of Section \ref{Formalsec}.
\begin{example}
For the irreversible Michaelis-Menten system \eqref{mimeirrev}, start with a QSS assumption for complex $c$. Then $(e_0,k_1,0,k_{-1}$) is a QSS parameter value, therefore we may consider the ``small parameter" $k_2$. This QSS parameter value is also a TF parameter value, and singular perturbation reduction yields
\[
\frac{d}{dt}\begin{pmatrix}s\\c\end{pmatrix}=\frac{-k_2c}{k_1(e_0-c)+k_1s+k_{-1}}\begin{pmatrix}k_1s+k_{-1}\\k_1(e_0-c)\end{pmatrix}
\]
(see \cite{GSWZ1}, 3.1) on the invariant curve determined by $k_1e_0s-(k_1s+k_{-1})c=0$. \\
For the system on the curve the rates of change for $s$ and $c$ are of the same order. Therefore the QSS assumption for $c$ cannot be validated for the reduced system, which correctly describes the dynamics after a short initial phase.
(As we have seen earlier, the classical QSS reduction is different from the singular perturbation reduction here, hence yields incorrect results.)
\end{example}
Thus, while QSS reduction will frequently lead to singular scenarios, the QSS variety and the slow manifold need not coincide (even locally), and if they do coincide then the reductions may be substantially different, hence classical QSS reduction provides incorrect results. It is therefore appropriate to characterize the distinguished situation when both reductions exist and agree.
\begin{definition}\label{consistency} Assume that system \eqref{sys} admits a QSS parameter value $\pi^*$ for species $x_{r+1},\ldots,x_n$ which is also a TF parameter value. We call the QSS reduction {\em consistent with the singular perturbation reduction} whenever the following hold.
\begin{enumerate}[(i)]
\item The slow manifold $\widetilde V$ and the QSS variety $U_{\pi^*}$ coincide near $y^*$. 
\item Given $\rho$ such that $\pi^*+\epsilon\rho\in \mathbb R^m_+$ for all sufficiently small $\epsilon\geq 0$, the QSS reduction and the Tikhonov-Fenichel reduction of $\dot x=h(x,\pi^*+\epsilon\rho)$ agree up to first order in $\epsilon$.
\end{enumerate}
\end{definition}
Condition (i) is not an automatic consequence of $\pi^*$ being both a QSS parameter value and TF-critical; for an example see the Appendix, \ref{slowqss}.\\

As shown by example at the end of Section \ref{Formalsec}, condition (i) alone does not imply (ii), hence is generally not sufficient for consistency. But we will now prove that (i) implies (ii) in the coordinate subspace scenario. \\
Thus assume that $U_{\pi^*}$ is open and dense in a coordinate subspace, and locally coincides with the slow manifold.
 In order to reduce the notational expenditure, we make some normalizations and ``hide" some parameters. We split $x=(x^{[1]},x^{[2]})$ as usual, and moreover we suppress $\pi^*$ and $\rho$ in the following, showing only $\epsilon$ explicitly. The QSS variety is, by assumption, determined by $x^{[2]}=\gamma^*$ for some $\gamma^*$; for the proof we may assume $\gamma^*=0$.
With these normalizations, and noting that $x^{[2]}=0$ defines an invariant set when $\epsilon=0$, there remains to investigate a system of the form
\begin{equation}\label{linvar}
\begin{array}{rcl}
\dot x^{[1]}&= &B(x^{[1]})x^{[2]}+B^*(x^{[1]},x^{[2]}) +\epsilon\left(u(x^{[1]})+U^*(x^{[1]},x^{[2]}\right) + \cdots \\
\dot x^{[2]}&= &A(x^{[1]})x^{[2]}+A^*(x^{[1]},x^{[2]}) +\epsilon\left(v(x^{[1]})+V^*(x^{[1]},x^{[2]}\right) + \cdots
\end{array}
\end{equation}
in a neighborhood of some point of $U_{\pi^*}$, with terms in the Taylor expansion as follows (all functions being analytic in $x$):
\begin{itemize}
\item For every $x^{[1]}$, the function $u(x^{[1]})$ has values in $\mathbb R^r$, the function $v(x^{[1]})$ has values in $\mathbb R^{n-r}$, and $A(x^{[1]})$ resp. $B(x^{[1]})$ are matrices of appropriate size.
\item $A(x^{[1]})$ is invertible for all $x^{[1]}$.
\item The functions $A^*$  and $B^*$ have order $\geq 2$ in $x^{[2]}$.
\item The functions $U^*$  and $V^*$ have order $\geq 1$ in $x^{[2]}$.
\end{itemize}
The Tikhonov-Fenichel reduction of system \eqref{linvar} is a special case of  \eqref{redform} in the Appendix, \ref{sptredu}, which was determined in \cite{GSWZ1}, Theorem 2; on the slow manifold it is given by
\begin{equation}\label{tflin}
\dot x^{[1]}=\epsilon\cdot\left(u(x^{[1]})-B(x^{[1]})A(x^{[1]})^{-1}v(x^{[1]})\right)
\end{equation}
Generally this does not coincide with the QSS reduction \eqref{qssredsysex}, although there are exceptions (notably the irreversible Michaelis-Menten system for small parameter $e_0$), as remarked in \cite{GSWZ1}.  But these two reductions always are in agreement in their first order terms (which is required in Definition \ref{consistency} and sufficient to ensure convergence), and this is the relevant point.
\begin{proposition}\label{allthesame}
Given system \eqref{linvar}, the first order term in $\epsilon$ of the QSS reduction \eqref{qssredsysex} with respect to $x^{[2]}$ is equal to (the corresponding term in) the Tikhonov-Fenichel reduction \eqref{tflin}. In other words, the QSS reduction is consistent with the singular perturbation reduction whenever the QSS variety is open-dense in a coordinate subspace and concides locally with the slow manifold.
\end{proposition}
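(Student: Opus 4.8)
The plan is to compute the explicit QSS reduction \eqref{qssredsysex} as a power series in $\epsilon$ and match its first-order term against \eqref{tflin}. First I would solve the QSS relation $h^{[2]}=0$ locally for $x^{[2]}$. Writing the right-hand side of the second line of \eqref{linvar} as $h^{[2]}(x^{[1]},x^{[2]},\epsilon)$, at the point $(x^{[1]}_0,0,0)$ one has $h^{[2]}=0$ (because $A^*$ has order $\geq 2$ in $x^{[2]}$) and $D_2h^{[2]}=A(x^{[1]}_0)$, which is invertible by hypothesis. The implicit function theorem with parameters $x^{[1]}$ and $\epsilon$ therefore yields a unique analytic solution $x^{[2]}=\Psi(x^{[1]},\epsilon)$ with $\Psi(x^{[1]},0)=0$; in particular $\Psi$ admits an expansion $\Psi=\epsilon\Psi_1(x^{[1]})+O(\epsilon^2)$.

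Next I would determine $\Psi_1$. Substituting $x^{[2]}=\epsilon\Psi_1+O(\epsilon^2)$ into $h^{[2]}=0$ gives
\[
0=\epsilon A(x^{[1]})\Psi_1+A^*(x^{[1]},\epsilon\Psi_1)+\epsilon v(x^{[1]})+\epsilon V^*(x^{[1]},\epsilon\Psi_1)+\cdots .
\]
Since $A^*$ has order $\geq 2$ and $V^*$ has order $\geq 1$ in $x^{[2]}$, the terms $A^*(x^{[1]},\epsilon\Psi_1)$ and $\epsilon V^*(x^{[1]},\epsilon\Psi_1)$ are both $O(\epsilon^2)$. Collecting the coefficient of $\epsilon$ yields $A(x^{[1]})\Psi_1+v(x^{[1]})=0$, hence $\Psi_1=-A(x^{[1]})^{-1}v(x^{[1]})$, so that $\Psi=-\epsilon A(x^{[1]})^{-1}v(x^{[1]})+O(\epsilon^2)$.

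Finally I would insert this into $h^{[1]}$. The explicit reduction \eqref{qssredsysex} reads $\dot x^{[1]}=h^{[1]}(x^{[1]},\Psi,\epsilon)$, and expanding the first line of \eqref{linvar} gives
\[
\dot x^{[1]}=B(x^{[1]})\Psi+B^*(x^{[1]},\Psi)+\epsilon u(x^{[1]})+\epsilon U^*(x^{[1]},\Psi)+\cdots .
\]
Because $\Psi=O(\epsilon)$, the order conditions again eliminate two terms: $B^*(x^{[1]},\Psi)=O(\epsilon^2)$ (order $\geq 2$ in $x^{[2]}$) and $\epsilon U^*(x^{[1]},\Psi)=O(\epsilon^2)$ (order $\geq 1$); moreover $B(x^{[1]})\Psi=-\epsilon B(x^{[1]})A(x^{[1]})^{-1}v(x^{[1]})+O(\epsilon^2)$. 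Thus the first-order term is $\epsilon\bigl(u(x^{[1]})-B(x^{[1]})A(x^{[1]})^{-1}v(x^{[1]})\bigr)$, which is exactly \eqref{tflin}.

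The computation is essentially bookkeeping, and the real content is that the structural order hypotheses on the starred functions are precisely what force the two reductions to agree at first order: they guarantee that the nonlinear corrections $A^*,B^*$ and the mixed terms $U^*,V^*$ enter only at order $\epsilon^2$, leaving the single common linear contribution $-B A^{-1}v$. The point requiring the most care is the uniform applicability of the implicit function theorem in the parameter $\epsilon$, together with the justification that the analytic solution $\Psi$ genuinely possesses the claimed $\epsilon$-expansion; both follow from analyticity and the invertibility of $A$, so I do not expect a serious obstacle here.
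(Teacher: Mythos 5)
Your proposal is correct and follows essentially the same route as the paper's proof: both solve $h^{[2]}=0$ via the implicit function theorem (the paper writes $g=0$ with solution $S=S_0+\epsilon S_1+\cdots$, your $\Psi$), use the order hypotheses on $A^*,V^*$ to obtain $S_1=-A^{-1}v$, and then substitute into the first equation, where the order hypotheses on $B^*,U^*$ eliminate the nonlinear corrections. The only difference is cosmetic — you spell out the final substitution step that the paper dispatches with "by similar arguments."
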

\begin{proof} We let 
\[
g(x^{[1]},x^{[2]},\epsilon):=A(x^{[1]})x^{[2]}+A^*(x^{[1]},x^{[2]}) +\epsilon\left(v(x^{[1]})+V^*(x^{[1]},x^{[2]})\right) +\cdots
\]
and note that $g(x^{[1]},0,0)=0$, with invertible $D_2g(x^{[1]},0,0)=A(x^{[1]})$. By the implicit function theorem (with parameter  $x^{[1]}$) we have a solution
\[
x^{[2]}=S(x^{[1]},\epsilon)=S_0(x^{[1]})+\epsilon S_1(x^{[1]})+\cdots
\]
of $g=0$, and one sees $S_0=0$ due to $g(x^{[1]},0,0)=0$. Substitution of this expression into $g=0$ yields
\[
\begin{array}{rcl}
0&=& \epsilon A(x^{[1]})S_1(x^{[1]})+\cdots +A^*(x^{[1]},\epsilon S_1(x^{[1]})+\cdots)\\
 & & +\epsilon v(x^{[1]})+\epsilon V^*(x^{[1]}, \epsilon S_1(x^{[1]})+\cdots)+\cdots
\end{array}
\]
with all the dots representing terms of order $\geq 2$. By construction, $A^*(x^{[1]},\epsilon S_1(x^{[1]})$ and $\epsilon V^*(x^{[1]}, \epsilon S_1(x^{[1]})$ contain only terms of order $\geq 2$. Thus comparing lowest order terms yields $S_1(x^{[1]})=-A(x^{[1]})^{-1}v(x^{[1]})$. In turn, substitution of this expression into the equation
\[
\dot x^{[1]}=B(x^{[1]})x^{[2]}+B^*(x^{[1]},x^{[2]}) +\epsilon\left(u(x^{[1]})+U^*(x^{[1]},x^{[2]})\right)+\cdots
\]
and keeping only the lowest order terms yields, by similar arguments, the assertion.
\end{proof}
Proposition \ref{allthesame} seems to provide a natural explanation why the classical QSS reduction procedure is frequently successful in practice. We are not aware of possible extensions of such a result to more general QSS varieties.

\begin{example} Consider the reversible Michaelis-Menten reaction \eqref{mimerev}, with quasi-steady state for complex and QSS parameter value $e_0$ (all other parameters $>0$); here $x^{[1]}=s$ and $x^{[2]}=c$. With the notation as in \eqref{linvar} we have
\[
\begin{array}{ll}
B(s)=k_1s+k_{-1};& u(s)= -k_1s;\\
A(s) =  -\left(k_1s+k_{-1}+k_2+k_{-2}(s_0-s)\right);& v(s)=k_1s+k_{-2}(s_0-s-c).
\end{array}
\]
With some high-school algebra (but no Taylor expansions) one arrives at the reduced system \eqref{mimerevqssc}.
\end{example}
\subsection{On explicit computation of QSS reductions}
Proposition \ref{allthesame} has a welcome consequence. As has been noted in Pantea et al. \cite{pgrc}, the classical reduction method cannot be put into practice whenever the implicit equations do not admit an explicit solution for $x^{[2]}$ as a function of $x^{[1]}$. (Such settings occur due to Abel's famous theorem on non-solvability of monic polynomial equations by radicals.) But there is a way to circumnavigate this problem if the QSS parameter value is known (as it should be) and the affine coordinate subspace setting is given. Then Proposition \ref{allthesame} allows for a direct computation which requires only basic algebraic operations. 
\begin{example} Consider the following system from Pantea et al. \cite{pgrc}, subsection 2.3:
\[
\begin{array}{rcl}
\dot a&=& k_2by-k_4ax+2k_5z^2\\
\dot b&=& 2k_1y^2-2k_{-1}b^2-k_2by-k_3bz+k_{-3}x^2+k_4ax\\
\dot x &=& 2k_3bz-2k_{-3}x^2-k_4ax\\
\dot y &=&-2k_1y^2+2k_{-1}b^2-k_2by+k_4ax\\
\dot z&=& k_2by-k_3bz+k_{-3}x^2 -2k_5z^2
\end{array}
\]
QSS reduction with respect to $x,\,y,\,z$ leads to a polynomial system which is (generically) not solvable by radicals, as was proven in \cite{pgrc}.\\
But for the QSS parameter value $k_{-1}=0$ (all other parameters $>0$) the system admits the invariant plane given by $x=y=z=0$, and the QSS reduction is consistent with the singular perturbation reduction.
Proposition \ref{allthesame} with small parameter $k_{-1}$ and decomposition
\[
\begin{pmatrix}\dot x\\
                        \dot y\\
                        \dot z
     \end{pmatrix} =\begin{pmatrix}-k_4a& 0 & 2k_3b\\
                                                  k_4a & -k_2b & 0\\
                                                  0 & k_2b & -k_3b\end{pmatrix}\begin{pmatrix} x\\
                         y\\
                         z
     \end{pmatrix} +\begin{pmatrix} \\
                         \vdots\\
                           \\
     \end{pmatrix} +k_{-1}\begin{pmatrix} 0\\
                         2b^2\\
                         0
     \end{pmatrix}
\]
yields the reduced system 
\[
\begin{array}{rcl}
\dot a &=& 2k_{-1}b^2\\
\dot b&=& -2 k_{-1}b^2
\end{array}
\]
It should be noted that Pantea et al. consider the case that both $k_{-1}$ and $k_4$ are small; this would not provide a QSS parameter value since the rank condition  on $D_2h^{[2]}$ from Definition \ref{qssbase} is violated.
\end{example}
Of course, not all QSS reductions of interest lead to affine coordinate subspaces, and therefore Proposition \ref{allthesame} is not a panacea. 
But as we have seen, classical QSS reduction for singular settings may yield incorrect results whenever the QSS variety is not an affine coordinate subspace. Hence there are good reasons to focus on the affine coordinate subspace case, and for this
 we have a feasible alternative approach which avoids any fundamental algebraic obstacles.

%%%%%%%%%%%%%%%%%%%%%%%%%%%%%%%%%%%%%%%%%%%%%%%%%%%%%%%%
\section{Examples and applications}
%%%%%%%%%%%%%%%%%%%%%%%%%%%%%%%%%%%%%%%%%%%%%%%%%%%%%%%%%%%%%%%%%
In this section we discuss various aspects of QSS parameter values, their computation and QSS reduction for several relevant systems.
\subsection{Bimolecular binding with intermediate complex}
Kollar and Siskova \cite{kosi} discuss the reaction network
\[
L+R{\overset{k_1}{\underset{k_{-1}}\rightleftharpoons}}
C{\overset{k_2}{\underset{k_{-2}}\rightleftharpoons}} P,
\]
which, via mass action kinetics and stoichiometry, leads to the differential equation system
\[
 \begin{array}{rcl}
 \dot \ell &=& -k_1 \ell (\ell + a) + k_{-1}c\\
 \dot c &=& k_1 \ell (\ell +a) - (k_{-1}+k_2)c + k_{-2} (b-\ell-c)
 \end{array}
\]
with the abbreviations $a:=r(0)-\ell(0)\geq 0$ (with no loss of generality) and $b:=\ell(0)$. We determine all QSS parameter values for this system.\\
In the irreversible case $k_{-2}=0$, which we consider first, one obtains
\[
 \begin{array}{rcl}
 \dot \ell &=& -k_1 \ell (\ell + a) + k_{-1}c\\
 \dot c &=& k_1 \ell (\ell +a) - (k_{-1}+k_2)c.
 \end{array}
\]
For this system the QSS parameter values with respect to $c$ are readily determined via Proposition \ref{adhoccrit}, with the following result.
\begin{center}
\begin{tabular}{|c|c|}
\hline
Condition on parameter & QSS variety ${S} $ defined by\\
\hline
$ k_1 =0$ & $ c=0 $ \\
\hline
$ k_2=0 $ & $k_1\ell (\ell +a) -k_{-1}c =0 $ \\
\hline
$ a =0$ & $ k_1 \ell ^2 -(k_{-1}+k_2)c = 0 $ \\
\hline
\end{tabular}
\end{center}
Here -- and in all following examples -- the understanding is that the remaining parameter values are $>0$.\\ The first two conditions define Tikhonov-Fenichel parameter values, while for the last one ($a=0$) the system admits only an isolated stationary point. For small parameter $a$  the QSS reduction yields (after some simplification) the equation
\[
 \dot \ell= - \frac{k_1 k_2 \ell (\ell +a )}{k_{-1}+k_2}.
\]
For the singular perturbation case of small parameter $k_2$ one obtains the reduced equation
\[
\begin{array}{rcl}
\dot \ell &=&- \frac{k_2 k_{-1} c}{k_{-1}+k_1(a+2\ell)} \\
\dot c &=& -\frac{k_2 k_1(a + 2\ell )c}{a k_1+k_{-1}+2k_1\ell}
\end{array}
\]
on the slow manifold defined by  $k_1 \ell ^2 - k_{-1} c=0$. After simplification one obtains a differential equation for $\ell$ alone, but one should note that this equation -- similar to the situation for Michaelis-Menten -- does not agree (even to first order in $k_2$) with the classical QSS reduction.\\

For the reversible case (i.e., $k_{-2}>0$) one obtains the following list of defining conditions for QSS parameter values with respect to $c$:
\[
k_{-2} = a =0;\quad k_{-2} = k_1 =0;\quad k_{-2} = k_2 =0;\quad  k_{-1} = k_1 =0;\quad k_{-1} = a =0.
\]
Comparison with Kollar and Siskova \cite{kosi} shows that the condition $a=b=0$ (corresponding to small $\ell(0)$ and small $r(0)$ in \cite{kosi}) does not appear. This indicates that the concept of ``validity of QSS reduction" as introduced in \cite{kosi} indeed leads to different parameter regions compared to the QSS parameter approach given here.  (In conjunction with the example in subsection \ref{subsecsubs} one sees that neither definition implies the other.) For the reversible system in question one will generically observe QSS-like behavior locally, near the stationary point $0$. This stationary point is an attracting node, and unless both $|k_{-1}-k_{-2}|$ and $k_2$ are small, the absolute ratio of smaller by larger eigenvalue will be $\ll 1$. Thus the preferred tangent direction for approaching the stationary point will be attained quickly in a suitable neighborhood of $0$. (For the irreversible case one obtains a saddle-node, with the attracting node part containing the first quadrant.) Here we see a relation between QSS and local theory near stationary points (which also seems to reflect the underlying mathematics in some examples from Borghans et al. \cite{borg}). The classical QSS reduction approach in this case (as well as generally) is not suitable for a complete determination of local invariant manifolds. On the other hand, classical QSS reduction works globally when it works.
%%%%%%%%%%%%%%%%%%%%%%%%%%%%%%%%%%%%%%%%%%%%%
\subsection{Competitive Inhibition}
The standard model for competitive inhibition (see e.g. Keener and Sneyd \cite{kesn}, p.~13) leads to the differential equation system
\[
 \begin{array}{rcl}
\dot s &=& k_{-1} c_1 - k_1 s (e_0-c_1 - c_2)\\
\dot {c_1} &=& k_1 s (e_0-c_1-c_2) - (k_{-1}+k_2)c_1\\
\dot {c_2} &=& k_3 (e_0 - c_1 - c_2) (i_0 - c_2) -k_{-3}c_2  
 \end{array}
\]
with nonnegative rate constants and initial concentrations $e_0$ for enzyme and $i_0$ for inhibitor. Again we are interested in determining  all QSS parameter values, with various choices for QSS variables. It is known that for small $e_0$ one has Tikhonov-Fenichel reduction with asymptotic slow manifold given by $c_1=c_2=0$; see e.g. \cite{gwz}.\\
If one requires QSS for both complexes $c_1$ and $c_2$ then the determination of QSS parameter values according to Proposition \ref{adhoccrit} yields an elimination ideal with eight generators. (We will not discuss this in detail here, due to space considerations.)\\
If one requires QSSA for the second complex $c_2$ then one finds an elimination ideal with two generators
\[  e_0 i_0 k_1 k_3 k_{-3} (k_{-1}+ k_2), \, e_0 i_0 k_3 k_{-3} (k_3^2(e_0-i_0)^2 + k_{-3}^2 + 2 k_3k_{-3}(e_0+i_0)) (k_{-1}+k_2).
\]
One obtains the following list of QSS parameter values for $c_2$; all varieties have codimension one, one is reducible.  (Positivity may impose additional restrictions, e.g. for the first variety: Whenever $i_0>e_0$ then one ends up with $c_1=c_2=0$.)
\begin{center}
\begin{tabular}{|c|c|}
\hline
Condition on parameter & QSS variety defined by\\
\hline
$ e_0 =0$ & $ k_3 ( c_1 + c_2) (i_0 - c_2) +k_{-3}c_2 =0 $ \\
\hline
$ i_0=0 $ & $ c_2 =0 $ \\
\hline
$ k_3 =0$ & $ c_2 =0 $ \\
\hline
$ k_{-3}=0 $ & $ c_2 = i_0 \text{ or }c_1+c_2=e_0 $  \\
\hline
$ k_{-1}=k_2 $ & $ k_3 (e_0 - c_1 - c_2) (i_0 - c_2) -k_{-3}c_2 =0 $ \\
\hline
\end{tabular}
\end{center}
According to \cite{gwz}, Proposition 8 the first and fourth case correspond to Tikhonov-Fenichel parameter values, the remaining ones do not. We look at one case of QSS reduction: For small $e_0$ (assuming $k_3 (c_1+2 c_2 - e_0-i_0) -k_{-3} \neq 0)$ one obtains the two-dimensional system
\[
 \begin{array}{rcl}
\dot s &=& k_{-1}c_1 - k_1 s(e_0-c_1-c_2)\\
\dot c_1 &=& k_1 s (e_0-c_1-c_2)-(k_{-1}+k_2)c_1\\
\dot c_2 &=& \frac{k_3 (i_0-c_2)(k_1s(e_0-c_1-c_2)-(k_{-1}+k_2)c_1)}{k_3(c_1+2c_2 - e_0-i_0)-k_{-3}}
 \end{array}
\]
on the QSS variety, which may be rewritten as a system for $s$ and $c_1$ after solving a quadratic equation for $c_2$. In this case the one-dimensional asymptotic slow manifold for the singular perturbation reduction is given by $c_1=c_2=0$; the reduced equation was determined in \cite{GSWZ1}, Subsection 3.2. 
%%%%%%%%%%%%%%%%%%%%%%%%%%%%%%%%%%%%%%%%%%%

%%%%%%%%%%%%%%%%%%%%%%%%%%%%%%%%%%%%%%%%%%%%%%%%
\subsection{Cooperativity with an arbitrary number of complexes -- small enzyme concentration}
Here we consider a reversible cooperative reaction network with an arbitrary number $m$ of complexes, with small enzyme concentration and QSS for all complexes. Our goal here is to use the QSS approach in order to compute a singular perturbation reduction (which seems hard to find in a straightforward manner).
With $C_0$ denoting the enzyme we have the network
\[
\begin{array}{rcccl}
 S + C_0 &\overset{k_{1}}{\underset{k_{-1}}\rightleftharpoons}& C_1  
&\overset{k_2}{\underset{k_{-2}}\rightleftharpoons}& C_0+P \\
S+C_1 &\overset{k_{3}}{\underset{k_{-3}}\rightleftharpoons}& C_2 
&\overset{k_4}{\underset{k_{-4}}\rightleftharpoons}& C_1+P\\
& & \vdots & &\\
S+C_{m-1} &\overset{k_{2m-1}}{\underset{k_{-(2m-1)}}\rightleftharpoons}& C_m  
&\overset{k_{2m}}{\underset{k_{-2m}}\rightleftharpoons}& C_{m-1}+P.\\
\end{array}
\]
and mass action kinetics yields the differential equation system
\[
\begin{array}{rcl}
 \dot s &=& \sum\limits_{j=0}^{m-1}k_{-(2j+1)} c_{j+1} - k_{2j+1} sc_j \\
 \dot p &=& \sum\limits_{j=0}^{m-1}k_{2j+2}c_{j+1} - k_{-2(j+1)}pc_j\\
 \dot c_0 &=& (k_{-1}+k_2) c_1 - (k_1s-k_{-2}p)c_0\\
 &\vdots&\\
 \dot c_\ell &=& (k_{2\ell-1}s + k_{-2\ell}p)c_{\ell-1} + (k_{-(2\ell+1)}+k_{2\ell+2})c_{\ell+1}\\
& & - (k_{-(2\ell-1)} + k_{2\ell} + k_{2\ell+1} s + k_{-2(\ell+1)}p )c_l,\qquad \qquad  1\leq \ell \leq m-1\\
 &\vdots&\\
 \dot c_m &=& (k_{2m-1}s +k_{-2m}p)c_{m-1}-(k_{-2m-2} + k_{2m})c_m.
\end{array}
\]
The relevant initial values are $s(0)=s_0$, $c_0(0)=e_0$, with all other initial concentrations equal to zero. By stoichiometry one has two first integrals that allow to substitute
\[
\begin{array}{rcl}
c_0&=& e_0-\sum_{j=1}^m c_j,\\
p&=& s_0-s-\sum_{j=1}^m j c_j.\\
\end{array}
\]
As is known from \cite{godiss}, Kap.~5.5 and \cite{GSWZ1}, subsection 3.5, there is a Tikhonov-Fenichel parameter value with $e_0=0$, all other parameters $>0$; the slow manifold is defined by all $c_j=0$ (at least for $s_0$ not too large). Since $e_0=0$ also defines a QSS parameter value, and the QSS variety coincides with the slow manifold, Proposition \ref{allthesame} is applicable. But in this instance we determine the singular perturbation reduction by way of QSS, since inverting the matrix $A(s)$ (notation as in Proposition \ref{allthesame}) would be rather arduous. We first emulate the procedure in \cite{godiss}, Kap.~5.5 (for the irreversible setting) and in a final step we keep only the lowest order terms in the small parameter $e_0$.\\
On the QSS variety one has  ``$\dot {c_0} = 0$", hence
\[
 c_1 = \frac{k_1 s - k_{-2} p}{k_{-1} + k_2} c_0
\]
By induction 
\[
c_{\ell} = c_0 \prod\limits_{j=1}^{\ell} \frac{k_{2j-1} s + k_{-2j} p 
}{k_{-(2j-1)} + k_{2j} }, \qquad 1 \leq \ell \leq m.
\]
Invoking the first integral $\sum_{j=0}^m c_j$ yields
\[
 c_0 = {e_0}/\left({1 + \sum\limits_{j=1}^{m}  \prod\limits_{i=1}^{\ell} \frac{k_{2i-1} s + k_{-2i} p }{k_{-(2i-1)} + k_{2i} }}\right)
\]
whence $c_0$ and all $c_j$ are of order $e_0$. 
As an intermediate result one finds
\[
\begin{array}{rcl}
 \dot s &=& - c_0 \sum\limits_{j=0}^{m-1} \frac{k_{-(2j+1)}k_{2j+1}s - k_{-(2j+1)}^2 p }{k_{-(2j+1)} + k_{2(j+1)}} \prod\limits_{i=1}^{j} \frac{k_{2i-1} s + k_{-2i} p 
}{k_{-(2i-1)} + k_{2i} }\\
 &=&\widetilde N/\widetilde D
\end{array}
\]
with
\[
\begin{array}{rcl}
\widetilde N &=& {- e_0 \sum\limits_{j=0}^{m-1} \frac{k_{-(2j+1)}k_{2j+1}s -k_{-(2j+1)}^2 p  }{k_{-(2j+1)} + k_{2(j+1)}} \prod\limits_{i=1}^{j} \frac{k_{2i-1} s + k_{-2i} p 
}{k_{-(2j-1)} + k_{2j} }}\\
\widetilde D&=&{1 + \sum\limits_{j=1}^{m}  \prod\limits_{i=1}^{\ell} \frac{k_{2i-1} s + k_{-2i} p }{k_{-(2i-1)} + k_{2i} }}.
\end{array}
\]
Using the first integral involving $p$, one sees that $p=s_0-s+e_0(\cdots)$, hence for first order in $e_0$ one obtains the reduced one-dimensional equation for a cooperative system with $m$ complexes:
\[
\dot s=N/D
\]
with 
\[
\begin{array}{rcl}
 N &=& {- e_0 \sum\limits_{j=0}^{m-1} \frac{k_{-(2j+1)}k_{2j+1}s -k_{-(2j+1)}^2 (s_0-s)  }{k_{-(2j+1)} + k_{2(j+1)}} \prod\limits_{i=1}^{j} \frac{k_{2i-1} s + k_{-2i} (s_0-s)
}{k_{-(2j-1)} + k_{2j} }}\\
 D&=&{1 + \sum\limits_{j=1}^{m}  \prod\limits_{i=1}^{\ell} \frac{k_{2i-1} s + k_{-2i} (s_0-s) }{k_{-(2i-1)} + k_{2i} }}.
\end{array}
\]
Note that the right-hand side is a rational function of $s$, with numerator and denominator of degree $m$.
%%%%%%%%%%%%%%%%%%%%%%%%%%%%%%%%%%%
\subsection{Cooperativity with two complexes }
We now consider the cooperative system with $m=2$ in greater detail; we are interested in QSS parameter values for all possible combinations of complexes. Using the two linear first integrals one has a three-dimensional system for $s$, $c_1$ and $c_2$.
We will not discuss all possible varieties and reductions, but just provide an overview of results.
\begin{itemize}
\item QSS parameter values for $c_1$ and $c_2$. Computing the elimination ideal according to Proposition \ref{adhoccrit} (with standard software) yields two generators
\[
k_3k_1^2e_0^2k_2^2(k_{-3}+k_4)^2 \text{  and  } k_1^2e_0^2k_2^2(k_{-3}+k_4)^2(k_2+k_{-1}).
\]
Thus one obtains the following four QSS-critical parameter values:
\[
k_1=0;\quad e_0=0;\quad k_2=0;\quad k_{-3}=k_4=0.
\]
According to \cite{godiss}, Kap.~9.4 (where a case-by-case discussion is given) all of these are TF-critical.
\item QSS parameter values for $c_2$. Here the ideal $J$ (see Proposition \ref{adhoccrit}) admits a Groebner basis with six generators, but
the straightforward computation of the elimination ideal with standard software is not feasible. On the other hand, the QSS parameter values for an affine coordinate subspace (according to Appendix, Remark \ref{easyrem}) can be determined: One obtains only $k_3 =0 $ (all other parameters $>0$) as QSS-critical parameter value, and the rank condition on $D_2h^{[2]}$ (see Definition \ref{qssbase}) is satisfied. (This parameter value is not TF-critical).
The QSS variety is given by \(c_2=0\), and the QSS reduced system 
\[
 \begin{array}{rcl}
  \dot s &=& -k_1 e_0 s + (k_{-1}+k_1 s) c_1 - k_3 sc_1\\
  \dot c_1 &=& k_1 e_0 s - (k_{-1}+k_2+k_1 s) c_1 + k_3 s c_1 
 \end{array}
\]
corresponds to the Michaelis-Menten system for one complex.\\
Note that the QSS conditions for $c_2$ alone are disjoint from those characterizing QSS for both complexes.
\item QSS parameter values for $c_1$. The ideal $J$ admits a Groebner basis with six generators, and the elimination ideal $J\cap\mathbb R[\pi]$ turns out to be trivial. The computation of further elimination ideals such as $J\cap\mathbb R[\pi,s]$ is not feasible with standard software, hence a complete picture is unavailable. But standard methods suffice to determine the QSS-critical parameter values for affine coordinate subspaces. One obtains two of these, viz.  $k_1 =0 $ and $k_{-3}=k_4=0$. Both already occurred in the discussion of QSS for both complexes.

\end{itemize}

%%%%%%%%%%%%%%%%%%%%%%%%%%%%%%%%%%%%%%%%%
\subsection{A model for decomposition of propanone}
This example is a modification of the one in Pantea et al. \cite{pgrc}, subsection 2.3, which describes the photochemical decomposition of propanone. Here we illustrate how a QSS reduction is effectively computed via singular perturbations, using Proposition \ref{allthesame}. The differential equation system we consider is as follows:
\[
\begin{array}{rcl}
 \dot c_A &=& -k_1c_A + k_{-1}c_Xc_Y-k_3c_Ac_Y \\
 \dot c_B &=& k_2c_X \\
 \dot c_C &=& k_4c_Y^2 \\
 \dot c_D &=& k_3c_Ac_Y \\
 \dot c_E &=& k_5c_Yc_Z \\
 \dot c_F &=& k_6c_Z^2 \\
 \dot c_G &=& k_7c_Z \\
 \dot c_H &=& k_8c_X^2\\
 \dot c_X &=& k_1c_A + k_{-1}c_Xc_Y-k_2c_X - 2k_8c_X^2 \\
 \dot c_Y &=& k_1c_A + k_{2}c_X - k_{-1}c_Xc_Y-k_3c_Ac_Y-2k_4c_Y^2 - k_5 c_Yc_Z + k_7c_Z \\
 \dot c_Z &=& k_3c_Ac_Y - k_5 c_Yc_Z - 2k_6c_Z^2-(k_7+k_9)c_Z
\end{array}
\]
Our modification consists of including the additional parameter $k_9$; the interpretation of this would be additional degradation of $Z$. (Admittedly, we introduce this additional parameter for technical reasons; see below.)
The interest here lies in QSS with respect to $(c_X,\,c_Y,\,c_Z)$. As proven in \cite{pgrc}, the resulting algebraic equations are generally not solvable by radicals (this fact is unaffected by the introduction of $k_9$).\\
An attempt to obtain all QSS parameter values via Proposition \ref{adhoccrit}, using standard software, works only partially: One finds a Groebner basis for the ideal $J$ but the elimination ideal is beyond reach. In view of subsection \ref{subsqssspt} and Remark \ref{easyrem} in the Appendix we are again content to find those QSS parameter values which correspond to affine coordinate subspaces. For these one obtains the conditions 
\[
\begin{array}{rcccl}
k_1&=& k_{-1}c_Xc_Y-k_2c_X - 2k_8c_X^2&=&0 \\
k_1-k_3c_Y&=& k_{2}c_X - k_{-1}c_Xc_Y-2k_4c_Y^2 - k_5 c_Yc_Z + k_7c_Z&=&0 \\
 k_3c_Y &=&- k_5 c_Yc_Z - 2k_6c_Z^2-(k_7+k_9)c_Z&=&0
\end{array}
\]
by comparing coefficients of powers of $c_A$. The result (most easily obtained via using nonnegativity of parameters and variables) is that only $k_1=0$ (all other parameters $>0$) defines a QSS parameter value, with the QSS variety $S$ defined by
$c_X= c_Y= c_Z = 0$. (There exist other QSS-critical parameter values but these do not satisfy the rank condition on $D_2h^{[2]}$.) We now use Proposition \ref{allthesame} with
\[
\widetilde A  := \begin{pmatrix} -k_2 & 0 & 0 \\ k_2 & -k_3c_A & k_7 \\ 0 & k_3 c_A & - (k_7+k_9) \end{pmatrix}, \qquad \widetilde B:= \begin{pmatrix} 0 & -k_3 c_A & 0 \\ k_2 & 0 & 0 \\ 0 & 0& 0 \\ 0 & k_3 c_A & 0 \\ 0 & 0& 0 \\ 0 & 0& 0 \\0 & 0& k_7 \\0 & 0& 0 \\\\\end{pmatrix},
\]

\[
 u := \begin{pmatrix} -c_A \\ 0 \\ 0 \\ 0 \\ 0 \\ 0\\0\\0\end{pmatrix}, \qquad v := \begin{pmatrix} c_A \\ c_A\\ 0\end{pmatrix}.
\]
(The entries depend, in principle, on $c_A$ through $c_H$, but in this special system only $c_A$ actually occurs. The notation $\widetilde A$ etc. was introduced here to distinguish matrices from chemical species.)
The reduced system is given by
\[
\begin{array}{rcl}
 \frac{d}{dt} \begin{pmatrix} c_A \\ c_B \\ c_C \\ c_D \\ c_E \\ c_F \\ c_G\\ c_H \end{pmatrix} &=& k_1 \cdot \left(u - \widetilde B \widetilde A^{-1}v \right)\\
&=& \begin{pmatrix} -k_1 c_A\cdot(3+2k_7/k_9)\\ k_1 c_A\\ 0 \\2k_1c_A(\cdot 1+k_7/k_9) \\ 0 \\ 0 \\ k_1 c_A\cdot k_7/k_9\\ 0
    \end{pmatrix}

\end{array}
\]
which (for this special system) boils down to the elementary one-dimensional equation
\[
\dot c_A= -k_1 c_A\cdot(3+2k_7/k_9)
\]
and simple quadratures. Thus, while it is an undeniable fact that the ``exact" resolution of the QSS conditions cannot be obtained by radicals, the lowest order approximation can be determined and discussed with little effort. \\
As mentioned above, we changed the system in \cite{pgrc} by introducing an extra parameter $k_9$. The technical reason for this is to ensure applicability of standard singular perturbation reduction. The original system corresponds to $k_9=0$. In this case  $\widetilde A$ is not invertible, and the scenario with  $k_1=0$ and QSS variety given by $c_X=c_Y=c_Z=0$ is singular beyond the reach of standard singular perturbation theory.

%%%%%%%%%%%%%%%%%%%%%%%%%%%%%%%%%%%%%%%%%%%%%%%%%%%%%%%%%
\section{Appendix}
%%%%%%%%%%%%%%%%%%%%%%%%%%%%%%%
For the reader's convenience we collect here some (known) facts from various disciplines, some technical proofs, as well as supplementary material and examples.
\subsection{Some facts about algebraic varieties}\label{algvarapp}
%%%%%%%%%%%%%%%%%%%%%%%%%%%%%%%%%%
We collect some properties of real and complex algebraic varieties; proofs and details can be found in Kunz \cite{kunz} and Shafarevich \cite{schafa} (in particular Ch.~2, \S2-3). Let $\mathbb K$ stand for $\mathbb R$ or $\mathbb C$.
\begin{itemize}
\item We call a subset $Y$ of $\mathbb K^n$ {\em Zariski closed} if it is the common zero set of a collection $(\phi_i)_{i\in I}$ of polynomials. Conversely, given any subset $M\subseteq \mathbb K^n$, its vanishing ideal 
\[
J(M)=\left\{\psi; \psi \text{  polynomial and  }\psi(M)=0\right\}
\]
is a radical ideal in $\mathbb K[x_1,\ldots, x_n]$. The zero set of $J(M)$ is called the Zariski closure of $M$. A subset of $\mathbb K^n$ is called Zariski open if its complement is Zariski closed. The Zariski open sets form a topology on $\mathbb K^n$.
\item A Zariski closed $Y\subseteq\mathbb K^n$ is called {\em reducible} if it is the union of two proper Zariski closed subsets, and {\em irreducible} otherwise. Any Zariski closed set is a union of finitely many irreducible ones, which are called its irreducible components.
\item For the purpose of this paper, a {\em subvariety}  $V\subseteq \mathbb K^n$ (briefly, a  variety) is a relatively Zariski open subset of a Zariski closed $Y\subseteq \mathbb K^n$. We call $V$ irreducible if its Zariski closure has this property. 
\item The {\em tangent space} to $V$ at $y\in V$ is the intersection of the kernels of all $D\phi(y)$, with $\phi\in J(V)$. 
\item We call a point $y$ of a variety $V$ {\em simple} if (i) $y$ is contained in just one irreducible component $W$ of $V$, and (ii) the tangent space to $W$ at $y$ has minimal dimension. The simple points of an irreducible variety $W$ form a submanifold of $\mathbb K^n$, and its dimension is equal to the dimension of the tangent space at any simple point.
\item If $V$ is an irreducible $r$-dimensional subvariety of $\mathbb K^n$ and $y\in V$ is a simple point then (with regard to the Zariski topology) a relatively open neighborhood of $y$ in $V$ can be represented as the common zero set of $n-r$ polynomials in $J(V)$.
\end{itemize}

%%%%%%%%%%%%%%%%%%%%%%%%%%%%%%%%%%
\subsection{Invariance and invariance criteria}\label{invapp}
%%%%%%%%%%%%%%%%%%%%%%%%%%%%%%
We consider an ordinary differential equation
\begin{equation}\label{genericdeq}
\dot x=f(x)
\end{equation}
on a nonempty open subset $U\subseteq \mathbb R^n$, with $f:\,U\to\mathbb R^n$ smooth.
Given an open subset $\widetilde U$ of $\mathbb R^n$ and a smooth function $\theta:\, \widetilde U\to \mathbb R$, the {\em Lie derivative} of $\theta$ with respect to $f$ is defined by
$L_f(\theta)(x)=D\theta(x)f(x)$. The Lie derivative describes the rate of change for $\theta$ along solutions of \eqref{genericdeq}; it is therefore relevant for invariance criteria.
\begin{lemma}\label{invlem}
\begin{enumerate}[(a)]
\item Let $\theta_1,\ldots,\theta_s$ be smooth $\mathbb R$-valued functions on $\widetilde U\subseteq U$, and assume that there are smooth functions $\rho_{jk}$ on $\widetilde U$ such that
\begin{equation}\label{invcrit}
L_f(\theta_j)=\sum_{k=1}^s\rho_{jk}\theta_k,\quad 1\leq j\leq s.
\end{equation}
Then the common zero set $Y$ of the $\theta_j$ is an invariant set of \eqref{genericdeq}; i.e., for all $y\in Y$ the solution trajectory through $y$ is contained in $Y$.
\item Conversely, if  $Y$ is invariant then every $L_f(\theta_j)$ vanishes on the common zero set of $\theta_1,\ldots,\theta_s$.
\item A stronger converse of part (a) holds near any point $y\in Y$ at which the Jacobian of $(\theta_1,\ldots, \theta_s)$ has rank $s$: Invariance of the set $Y$ implies a relation \eqref{invcrit} in some neighborhood of $y$, with smooth functions $\rho_{jk}$.
\item For polynomial or rational functions $\theta_i$ and vector fields $f$, given the full rank condition for the Jacobian of $(\theta_1,\ldots, \theta_s)$ at $y$, invariance of the set $Y$ will imply a relation \eqref{invcrit} with rational functions $\rho_{jk}$ that are regular in $y$.
\end{enumerate}
\end{lemma}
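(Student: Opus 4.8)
The plan is to take the four parts in order, since (a) and (b) are elementary consequences of differentiating along trajectories, while (c) and (d) are local converses of increasing delicacy. For part (a) I would fix $y\in Y$, let $x(t)$ be the solution of \eqref{genericdeq} with $x(0)=y$, and set $u_j(t):=\theta_j(x(t))$. The chain rule together with \eqref{invcrit} gives $\dot u_j=L_f(\theta_j)(x(t))=\sum_k\rho_{jk}(x(t))\,u_k(t)$, so that $u=(u_1,\dots,u_s)$ solves the linear homogeneous system $\dot u=R(t)u$ with $R(t):=(\rho_{jk}(x(t)))$ and initial value $u(0)=0$; uniqueness for linear equations forces $u\equiv0$, i.e. the trajectory remains in $Y$ as long as it stays in $\widetilde U$, which it does since $Y\subseteq\widetilde U$. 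Part (b) is the reverse computation: if $Y$ is invariant and $y\in Y$, then $\theta_j(x(t))\equiv0$ along the trajectory, and differentiating at $t=0$ yields $L_f(\theta_j)(y)=D\theta_j(y)f(y)=0$.

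For part (c) the full rank of the Jacobian of $(\theta_1,\dots,\theta_s)$ at $y$ makes $\Theta:=(\theta_1,\dots,\theta_s)$ a submersion near $y$, so by the rank (submersion) theorem there is a local diffeomorphism straightening the $\theta_i$ to the first $s$ coordinate functions; in these coordinates $Y$ is locally $\{x_1=\dots=x_s=0\}$. By part (b) each $L_f(\theta_i)$ vanishes on this set, so Hadamard's lemma writes it as $\sum_{k=1}^s x_k g_k$ with smooth $g_k$; transporting the $g_k$ back through the diffeomorphism produces smooth $\rho_{ik}$ with $L_f(\theta_i)=\sum_k\rho_{ik}\theta_k$ near $y$. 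This is essentially the smooth specialization of the \emph{Fact} proved inside Proposition \ref{adhoccrit}, so I would simply invoke that mechanism.

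Part (d) is where I expect the real work, and it is the main obstacle: the straightening diffeomorphism of part (c) is in general only analytic and destroys rationality, so its Hadamard coefficients need not be rational. I would instead argue in the local ring $\mathcal O_y$ of rational functions regular at $y$. The full rank condition says that $\theta_1,\dots,\theta_s$ form part of a regular system of parameters at the smooth point $y$, hence generate a prime (in particular radical) ideal in the regular local ring $\mathcal O_y$, and this ideal coincides with the local vanishing ideal of $V(\theta_1,\dots,\theta_s)$. By part (b), $L_f(\theta_j)$ vanishes on the real locus $Y$ near $y$; the delicate point is to promote this to vanishing on the complex variety germ, which holds because at a smooth point the real locus is a totally real slice that is Zariski-dense in the germ, so analyticity in the transverse complex directions forces vanishing on the whole germ. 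Then $L_f(\theta_j)\in(\theta_1,\dots,\theta_s)\mathcal O_y$, and reading off the coefficients gives rational $\rho_{jk}$ regular at $y$, which may be taken real since the data are real and $y$ is a real point. If one prefers to avoid commutative algebra, the alternative I would pursue is to make the Hadamard decomposition explicit via the nonvanishing $s\times s$ minor $\Delta=\det(\partial\theta_i/\partial x_k)$, checking that every denominator produced is a power of $\Delta$ and hence regular at $y$; verifying that the rationality survives this bookkeeping is precisely the step that needs care.
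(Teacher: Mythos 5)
Your proposal is correct and follows essentially the same route as the paper: parts (a) and (b) by differentiating along trajectories (the paper cites \cite{CLPW} for (a)), part (c) via Hadamard's lemma after straightening, and part (d) by passing to the local ring at $y$ in the complexification, where the $\theta_i$ generate the vanishing ideal (the paper invokes Shafarevich for this), and then taking real parts. Your treatment of (d) merely makes explicit a step the paper leaves implicit, namely that vanishing of $L_f(\theta_j)$ on the real locus near a smooth real point propagates to the complex germ.
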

\begin{proof} The statement of part (a) is common knowledge; see for instance \cite{CLPW}, Lemma 2.1. To prove part (b) and (c), note that (local) invariance forces $L_f(\theta_j)=0$ on the common zero set of $\theta_1,\ldots,\theta_s$, and that in the full rank case (due to a theorem by Hadamard) every function which vanishes on this zero set is locally a linear combination of the $\theta_i$ with smooth cofficients. For part (d) the argument in \cite{CLPW} works in principle, with some modification: In the complexification, consider the local ring of $y$. By Shafarevich \cite{schafa}, Ch.~2, \S3, Thms.~4 and 5 (see also Ch.~2, \S2) the functions $\theta_1,\ldots,\theta_s$ generate the vanishing ideal of $Y$ in this local ring. Due to invariance, all $L_h(\theta_j)$ are elements of this vanishing ideal, and the assertion follows for the complex case. Taking real parts, one is done.
\end{proof}
%%%%%%%%%%%%%%%%%%%%%%%%%%%%%%%
\subsection{Dependency results}\label{depresults}
In this subsection we consider smooth differential equations
\[
\dot x=f(x) \text{  and  }\dot x = g(x) \text{  on  }U.
\]
The proof of Proposition \ref{consapprox}(b) readily follows from the arguments below with $f(x)=h(x,\pi)$ and $g(x)=h_{\rm red}(x,\pi)$ with $\pi$ fixed. (Note that the assumptions for Proposition \ref{consapprox} hold uniformly in some parameter range.) Although elementary, this fact seems to be less familiar; therefore we prove it in detail here.\\
For $y\in U$ denote by $F(t,y)$ (resp. $G(t,y)$) the solution of the initial value problem $\dot x=f(x),\,x(0)=y$ (resp. $\dot x=g(x),\,x(0)=y$).\\
We will always consider the maximum norm $\Vert\cdot\Vert=\Vert\cdot\Vert_\infty$ on $\mathbb R^n$ and its associated operator norm. With $\overline{B_r(y)}$ we denote the closed ball with center $y$ and radius $r$. Moreover we let $K\subseteq U$ be compact with nonempty interior, and let $R>0$ such that $\Vert f(x)\Vert\leq R$ and $\Vert g(x)\Vert\leq R$ for all $x\in K$. We note a basic result first.
\begin{lemma}\label{escape}
Let $y\in K$ and $r>0$ such that  $\overline{B_r(y)}\subseteq K$. Then $F(t,y)$ and $G(t,y)$ exist and are contained in $\overline{B_r(y)}$ for all $t\in \left[0,\,r/R\right]$.
\end{lemma}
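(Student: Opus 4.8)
The plan is to combine the standard local existence theory for smooth ODEs with an elementary a priori bound on the displacement of the trajectory. Since $f$ is smooth it is locally Lipschitz, so by the Picard--Lindel\"of theorem (see e.g.\ Walter \cite{Walter}) the initial value problem $\dot x=f(x)$, $x(0)=y$ has a unique solution $F(\cdot,y)$ on a maximal forward interval $[0,t^+)$, with the additional property that if $t^+$ is finite then the trajectory eventually leaves every compact subset of $U$. The same holds for $G(\cdot,y)$ with $g$ in place of $f$; I would carry out the argument for $F$ only, the case of $G$ being identical.

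The core estimate is as follows: as long as the trajectory remains in $\overline{B_r(y)}$, which is contained in $K$ where $\Vert f\Vert\le R$, the integral form of the equation yields
\[
\Vert F(t,y)-y\Vert=\left\Vert\int_0^t f(F(s,y))\,ds\right\Vert\le\int_0^t\Vert f(F(s,y))\Vert\,ds\le Rt.
\]
Hence for $t\le r/R$ one gets $\Vert F(t,y)-y\Vert\le r$, i.e.\ the trajectory stays in $\overline{B_r(y)}$. To make this a rigorous confinement statement I would argue by contradiction: assuming the trajectory leaves $\overline{B_r(y)}$ at or before time $r/R$, set $t_0:=\inf\{t\ge 0:\ F(t,y)\notin\overline{B_r(y)}\}$. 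By continuity $\Vert F(t_0,y)-y\Vert=r$, while the estimate above (valid on $[0,t_0)$, where the trajectory still lies in $\overline{B_r(y)}\subseteq K$) forces $r\le Rt_0$, that is $t_0\ge r/R$; so the trajectory cannot escape $\overline{B_r(y)}$ before time $r/R$.

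Finally I would tie confinement back to existence: for every $t\in[0,r/R)$ at which $F(\cdot,y)$ is defined it lies in the compact set $\overline{B_r(y)}\subseteq U$, so the escape characterization of $t^+$ rules out $t^+\le r/R$, and the solution therefore exists on all of $[0,r/R]$; the confinement argument then applies on this whole interval. The one point requiring care is precisely the interplay between existence and confinement: the bound $\Vert f\Vert\le R$ is available only where the trajectory is already known to lie in $K$, so a naive argument would be circular. The maximal-interval/contradiction formulation above resolves this cleanly, and I expect it to be the sole genuine subtlety -- everything else is a routine integral estimate that transfers verbatim to $G$.
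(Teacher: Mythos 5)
Your proposal is correct and follows essentially the same route as the paper's proof: the integral bound $\Vert F(t,y)-y\Vert\le Rt$ for confinement in $\overline{B_r(y)}$, combined with the fact that a solution confined to a compact subset of $U$ can be extended. The paper states this quite tersely; your first-exit-time contradiction and the explicit appeal to the maximal-interval escape characterization simply make rigorous the interplay between confinement and existence that the paper leaves implicit.
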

\begin{proof}
Abbreviate $z(t)=F(t,y)$. Then for $t>0$
\[
\begin{array}{rcccl}
z(t)-y&=&\int_0^t\dot z(s)\,ds&=&\int_0^t f(z(s))\, ds,\\
\text{so  }\Vert z(t)-y\Vert&\leq&\int_0^t\Vert f(z(s))\Vert\,ds&\leq &R\cdot t.\\
\end{array}
\]
Existence follows from the fact that the trajectories are contained in a compact subset of $\mathbb R^n$.
\end{proof}
Next we obtain a lower estimate for the norm of the difference of solutions when $f(y)\not=g(y)$ at some initial value $y$. The ingredients in the proof are standard.
\begin{lemma}\label{nonclose}
In addition to the above, let $\Vert Df(x)\Vert \leq L$ and $\Vert Dg(x)\Vert \leq L$ for all $x\in K$, with some $L>0$. Let $y$ be an interior point of $K$, and assume that $\Vert f(y)-g(y)\Vert =|f_i(y)-g_i(y)|\geq2\rho$, with suitable $\rho>0$ and $i, \,1\leq i\leq n$.
\begin{enumerate}[(a)]
\item Let $d>0$ such that $\overline{B_d(y)}\subseteq K$ and $|f_i(x_1)-g_i(x_2)| \geq \rho$ for all $x_1, x_2\in \overline{B_d(y)}$. Then for $t_1:=d/R$ one has that $F(t,y)$ and $G(t,y)$ are contained in $K$ for all $t\in \left[0,\,t_1\right]$ and $\Vert F(t_1,y)-G(t_1,y)\Vert \geq \rho d/R$.
\item If $\overline{B_{\rho/2L}(y)}\subseteq K$ then for $t_2:=\rho/(2LR)$ one has that  $F(t,y)$ and $G(t,y)$ are contained in $K$ for all $t\in \left[0,\,t_2\right]$ and $\Vert F(t_2,y)-G(t_2,y)\Vert \geq \rho^2/2RL$.
\end{enumerate}
\end{lemma}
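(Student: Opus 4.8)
The plan is to estimate the $i$-th component of $F(t,y)-G(t,y)$ from below by writing it as an integral of $f_i\circ F - g_i\circ G$ and exploiting that this integrand stays bounded away from zero \emph{with a fixed sign} on a small ball around $y$. For part (a) I would first invoke Lemma \ref{escape}: since $\overline{B_d(y)}\subseteq K$, both solutions exist and remain in $\overline{B_d(y)}\subseteq K$ for all $t\in[0,d/R]=[0,t_1]$, which already gives the confinement assertion. Using $\dot F_i=f_i\circ F$ and $\dot G_i=g_i\circ G$ together with the fundamental theorem of calculus, I would record the identity
\[
F_i(t_1,y)-G_i(t_1,y)=\int_0^{t_1}\big(f_i(F(s,y))-g_i(G(s,y))\big)\,ds .
\]

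The key step is that the integrand does not change sign. By hypothesis $|f_i(x_1)-g_i(x_2)|\geq\rho$ for all $x_1,x_2\in\overline{B_d(y)}$, and the map $(x_1,x_2)\mapsto f_i(x_1)-g_i(x_2)$ is continuous and nowhere zero on the connected set $\overline{B_d(y)}\times\overline{B_d(y)}$; hence its sign is constant. Taking (without loss of generality) the positive sign and using that $F(s,y),G(s,y)\in\overline{B_d(y)}$ for $s\in[0,t_1]$, the integrand is $\geq\rho$ throughout, so $|F_i(t_1,y)-G_i(t_1,y)|\geq\rho\,t_1=\rho d/R$. Since the maximum norm dominates any single component, this yields $\Vert F(t_1,y)-G(t_1,y)\Vert\geq\rho d/R$, which is part (a).

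For part (b) the task reduces to producing a radius $d$ for which the separation hypothesis of (a) holds, using only the Lipschitz bound $L$ and the pointwise gap $|f_i(y)-g_i(y)|\geq 2\rho$. Because the operator norm associated with the maximum norm is the maximal absolute row sum, the gradient row satisfies $\Vert Df_i\Vert\leq\Vert Df\Vert\leq L$, and likewise for $g$; integrating along the segment from $y$ (which lies in the convex set $\overline{B_d(y)}\subseteq K$) gives $|f_i(x_1)-f_i(y)|\leq Ld$ and $|g_i(x_2)-g_i(y)|\leq Ld$ for $x_1,x_2\in\overline{B_d(y)}$. The triangle inequality then yields
\[
|f_i(x_1)-g_i(x_2)|\geq|f_i(y)-g_i(y)|-|f_i(x_1)-f_i(y)|-|g_i(x_2)-g_i(y)|\geq 2\rho-2Ld .
\]
Choosing $d:=\rho/(2L)$ makes the right-hand side equal to $\rho$, so the hypotheses of (a) are met (the required inclusion $\overline{B_{\rho/2L}(y)}\subseteq K$ is exactly the standing assumption of (b)). Applying (a) with this $d$ gives $t_1=\rho/(2LR)=t_2$ and the bound $\Vert F(t_2,y)-G(t_2,y)\Vert\geq\rho d/R=\rho^2/(2LR)$.

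As for the main obstacle: the computations are entirely routine, and the only point that genuinely needs care is the sign-constancy of the integrand, since it is precisely this that allows a \emph{lower} bound to survive integration --- a naive absolute-value estimate inside the integral would collapse. Everything else is the confinement Lemma \ref{escape}, the fundamental theorem of calculus, and the row-sum description of the operator norm.
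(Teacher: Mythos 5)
Your proof is correct and follows essentially the same route as the paper: part (a) via the integral identity for the $i$-th component together with the fixed-sign lower bound $\rho$ on the integrand (confinement from Lemma \ref{escape}), and part (b) by a Lipschitz/mean-value estimate showing $|f_i(x_1)-g_i(x_2)|\geq 2\rho-2Ld=\rho$ for $d=\rho/(2L)$, then invoking (a). The only cosmetic differences are that you make the sign-constancy explicit via connectedness (the paper just assumes $f_i(y)>g_i(y)$ without loss of generality) and you estimate $f_i$ and $g_i$ separately where the paper bounds the joint function $H(x_1,x_2)=f(x_1)-g(x_2)$ along a segment in the product space.
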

\begin{proof} (a) We may assume that $f_i(y)>g_i(y)$. Abbreviate $z(t):=F(t,y)$ and $w(t):=G(t,y)$. Then for $t>0$
\[
\begin{array}{rcl}
z_i(t)-w_i(t)&=& \int_0^t f_i(z(s))-g_i(w(s))\, ds,\\
\text{so  }z_i(t)-w_i(t)&\geq & t\cdot \rho\\
\end{array}
\]
as long as $z(t),\,w(t)\in\overline{B_d(y)}$. Now Lemma \ref{escape} shows the assertion.\\
(b) In view of part (a) we just need to show that $|f_i(x_1)-g_i(x_2)|\geq \rho$ for all $x_1,\,x_2\in \overline{B_d(y)}$, with $d=\rho/2L$. Define $H(x_1,x_2):=f(x_1)-g(x_2)$, thus
\[
DH(x_1,x_2)\,(v_1,v_2)=Df(x_1)v_1+Dg(x_2)v_2.
\]
With 
\[
w(s):=\left(\begin{array}{c} y\\y\end{array}\right)+s\cdot\left(\begin{array}{c} x_1-y\\x_2-y\end{array}\right),\quad 0\leq s\leq 1
\]
one obtains
\[
\begin{array}{l}
H(x_1,x_2)-H(y,y)=\int_0^1 \frac{d}{ds}\,H(w(s))\,ds\\
  =\int_0^1 Df(y+s(x_1-y))\cdot(x_1-y)+ Dg(y+s(x_2-y))\cdot(x_2-y)\, ds
\end{array}
\]
which implies
\[
\Vert H(x_1,x_2)-H(y,y)\Vert \leq L\cdot(\Vert x_1-y\Vert + \Vert x_2-y\Vert).
\]
For $x_1,\,x_2\in \overline{B_d(y)}$ with $d=\rho/2L$ one finally has
\[
|H_i(x_1,x_2)|\geq |H_i(y,y)|-\Vert H(y,y)-H(x_1,x_2)\Vert\geq 2\rho-\rho
\]
as desired.
\end{proof}
%%%%%%%%%%%%%%%%%%%%%%%%%%%%%%%%%%%%%%%%%
\subsection{Singular perturbation reduction}\label{sptredu}

The search for QSS-critical parameter values may lead to TF-critical parameter values, which in turn may lead to Tikhonov-Fenichel reduction in some applications. 
We recall some notions and results from \cite{gwz} and \cite{gw}; see these sources for details.
\begin{itemize}
\item A parameter value \(\widehat\pi\in \Pi\) is called a {\em Tikhonov-Fenichel parameter value (TFPV) for dimension \(s\)} ( \(1\leq s\leq n-1\))  of  system \eqref{sys} whenever the following hold:
\begin{enumerate}[(i)]
 \item The zero set $ \mathcal{V}(h(\cdot, \widehat\pi))$ of \(x\mapsto h(x\,,\widehat\pi)\) contains a local submanifold \(\widetilde V\) of dimension \(s\).
 \item There is a point \(x_0\in \widetilde V\)  such that $Dh(x,\widehat\pi)$ has rank $n-s$ and
\[
 \mathbb R^n = {\rm Ker}\ Dh(x,\widehat\pi) \oplus {\rm Im}\ Dh(x,\widehat\pi)
\]
for all $x\in \widetilde V$ near $x_0$.
\item The nonzero eigenvalues of $\ Dh(x_0,\widehat\pi) $ have real part $<0$.
\end{enumerate}
Note that condition (i) alone characterizes TF-critical parameter values; cf.~Definition \ref{tfcritdef}.
\item Given a TFPV $\widehat\pi$ and some (suitable) $\rho\in\mathbb R^m$, one obtains reduction by Tikhonov's theorem for the system
\begin{equation}\label{syseps}
\dot x = h(x,\widehat\pi+\epsilon\rho) = h(x,\widehat\pi)+\epsilon q(x)+\cdots,  \mbox{     as   }\epsilon\to 0.
\end{equation}
\item The reduced system corresponding to \eqref{syseps} is defined on the invariant manifold $\widetilde V$. To find it explicitly, one uses a decomposition
\[
h(x,\,\widehat\pi)=P(x,\,\widehat\pi)\,\mu(x,\,\widehat\pi)
\]
in some neighborhood of $x_0$. Here $P$ is an $\mathbb R^{n\times (n-s)}$--valued function of rank $n-s$ on $\widetilde V$, and $\widetilde V$ equals the vanishing set of the $\mathbb R^{(n-s)}$--valued function $\mu$. One verifies that $A(x,\,\widehat\pi):=D\mu(x,\,\widehat\pi)\,P(x,\,\widehat\pi)$ is invertible on $\widetilde V$. The reduced system on $\widetilde V$ is given by
\begin{equation}\label{redform}
\dot x=\epsilon\cdot\left(I_n-P(x,\,\widehat\pi)A(x,\,\widehat\pi)^{-1}D\mu(x,\,\widehat\pi)\right)q(x),
\end{equation}
in fast time scale resp. by
\begin{equation}\label{redformslow}
x^\prime=\left(I_n-P(x,\,\widehat\pi)A(x,\,\widehat\pi)^{-1}D_1\mu(x,\,\widehat\pi)\right)q(x)
\end{equation}
in slow time scale.
\end{itemize}
%%%%%%%%%%%%%%%%%%%%%%%%%%%%%%%%%%%%%%%%%
\subsection{Supplementary material}
\subsubsection{Variations of the reduced system}\label{varredsys}
 With regard to system \eqref{qssredsys}, one is only interested in its restriction to $U_\pi$.
 More generally one may therefore call any equation of the form
\begin{equation}\label{qssredsysplus}
\begin{array}{rcrcl}
\dot x^{[1]}&=& h^{[1]}(x,\pi)&+&\sum_{j\geq r+1}m^{[1]}_jh_j\\
\dot x^{[2]}&=& -D_2h^{[2]}(x,\pi)^{-1}D_1h^{[2]}(x,\pi)h^{[1]}(x,\pi)&+&\sum_{j\geq r+1}m^{[2]}_jh_j\\
\end{array}
\end{equation}
with (e.g.) rational functions $m^{[1]}_j$ and  $m^{[2]}_j$ a reduced system corresponding to \eqref{sys}, since the right hand sides of \eqref{qssredsys} and \eqref{qssredsysplus} are equal on $U_\pi$. In this respect, the reduced system is not unique.

\begin{example} Consider the irreversible Michaelis-Menten system \eqref{mimeirrev}, with QSS for complex.
The reduced system according to \eqref{qssredsys} is then given by
\[
\begin{array}{rcl}
\dot{s}&=-&k_1e_0s+(k_1s+k_{-1})c,\\
\dot{c}&= &\frac{k_1(e_0-c)}{k_1s+k_{-1}+k_2}\left(k_1e_0s-(k_1s+k_{-1})c\right).
\end{array}
\]
Using $L_h(c)=-k_1e_0s+(k_1s+k_{-1}+k_2)c=0$ one may use \eqref{qssredsysplus} to replace this system by
\[
\begin{array}{rcl}
\dot{s}&=-&k_1e_0s+(k_1s+k_{-1})c,\\
\dot{c}&= -&\frac{k_1(e_0-c)}{k_1s+k_{-1}+k_2}\cdot k_2c.
\end{array}
\]
\end{example}
(Of course, in the present example one may readily solve $L_h(c)=0$ for $c$ as a function of $s$ and obtain the familiar reduction.)
%%%%%%%%%%%%%%%%%%%%%%%%%%%%%%%%%%%%%%%%%%%
\subsubsection{Algorithmic considerations}\label{algoapp}
As noted in Subsection 3.4 above, the mathematically adequate approach for finding QSS-critical parameter values is to consider the ideal $J\subseteq \mathbb R[x,\,\pi]$ generated by the polynomials given in Proposition \ref{adhoccrit}, rather than the polynomials by themselves. A pertinent observation is the following.
\begin{remark}
If $(y^*,\pi^*)\in J$ then $\pi^*$ is a zero of the elimination ideal $J\cap \mathbb R[\pi]$. Thus the zeros of  $J\cap \mathbb R[\pi]$ are just the QSS-critical parameter values.  If $\widehat \pi$ is QSS-critical  then $\widehat\pi$ is a QSS parameter value if and only if there exists an $\widehat y\in Y_{\widehat\pi}$ such that $D_2h^{[2]}(\widehat y,\,\widehat \pi)$ has rank $n-r$.
\end{remark}

For properties of elimination ideals see e.g. Cox et al. \cite{coxII}. Standard algorithms use Gr\"obner bases and are implemented in {\sc Singular} \cite{DGPS} and other software systems. For more details, as well as examples from biochemistry, see \cite{gwz, gwz2}.  \\
Concerning feasibility, a straightforward algorithmic search for QSS-critical parameter values via Proposition \ref{adhoccrit} may quickly become cumbersome, even for relatively small systems. From an algorithmic perspective, much room for improvement remains.\\
However, in one relevant special setting the situation is better: Finding QSS parameter values which admit reduction to an affine coordinate subspace is less involved:
\begin{remark}\label{easyrem}
 Invariance of $Z_{\gamma^*}$ is equivalent to
\[
h_j(x_1,\ldots, x_r,\gamma_{r+1}^*,\ldots,\gamma_n^*,\pi)=0,\quad r+1\leq j\leq n.
\]
This opens up a shortcut for computations: To determine QSS parameter values for reduction to an affine coordinate subspace, write
\[ h_j(x_1,\ldots, x_r,\gamma_{r+1},\ldots, \gamma_n,\pi)
\]
as a linear combination of monomials in $x_1,\ldots, x_r$ with coefficients in $\mathbb R[\gamma,\pi]$. Then all these coefficients must equal zero; this yields computable conditions for $\pi$ and $\gamma$.
\end{remark}
\begin{example} Consider the reversible Michaelis-Menten system \eqref{mimerev}. Assuming QSS for complex, in the coordinate subspace setting we have
\[
\begin{array}{rcl}
h_2(s,\gamma,\pi)&=&k_1e_0s-(k_1s+k_{-1}+k_2)\gamma+k_{-2}(e_0-\gamma)(s_0-s-\gamma)\\
               &=&(k_1-k_{-2})(e_0-\gamma)s+\left((k_{-1}+k_2)\gamma+k_{-2}(e_0-\gamma)(s_0-\gamma)\right)
\end{array}
\]
View this as a polynomial in $s$, of degree one. The coefficient of $s$ yields 
\[
(k_1-k_{-2})(e_0-\gamma)=0
\]
which leads to two cases. 
\begin{itemize}
\item If the rate constants $k_1$ and $k_{-2}$ are equal, and $e_0>0$, there remains only the condition
\[
-\gamma(k_{-1}+k_2)+k_{-2}(e_0-\gamma)(s_0-\gamma)=0;
\]
thus the zeros of this quadratic function of $\gamma$ will define invariant straight lines for the system. An elementary discussion shows that both zeros are $\geq 0$, but only the smaller one is $\leq e_0$ (which is required by the initial conditions). Thus we find one invariant straight line that is of interest. (This has been observed before; see Miller and Alberty \cite{MiAl}.) Note that the assumption $e_0=0$ directly implies $\gamma=0$; see the following item.
\item If $k_1\not=k_{-2}$ then $\gamma=e_0$, with remaining condition 
\[
-\gamma(k_{-1}+k_2)=0.
\]
Thus $e_0=0$ or $k_{-1}=k_2=0$; both cases correspond to TF parameter values; see \cite{gwz}.
\end{itemize}
\end{example}
\subsubsection{Slow manifold and QSS variety}\label{slowqss}
Here we show by example that condition (i) in Definition \ref{consistency} is not an automatic consequence of $\pi^*$ being both a QSS parameter value and TF-critical.

\begin{example}
Given the first order reaction network
\[
 A_1 \overset{k_{1}}{\underset{k_{2}}\rightleftharpoons} A_2  
\overset{k_3}{\underset{k_{4}}\rightleftharpoons} A_3 \overset{k_{5}}{\rightharpoonup} \emptyset,
\]
the reaction equations
\[
\begin{array}{rccclcl}
 \dot x_1 &=& -k_1x_1 & + & k_2 x_2 & \\
 \dot x_2 &=& k_1x_1 &-& (k_2+k_3)x_2 & +& k_4x_3\\
 \dot x_3 &=& & & k_3 x_2 & -& (k_4+k_5) x_3\\
\end{array}
\]
admit the Tikhonov-Fenichel parameter value with $k_3=0$ and all other parameters $>0$ (differently stated, a small parameter $k_3$), with reduction to the one-dimensional slow manifold \( { S} := \{ (x_1,x_2,0)^{\rm tr} \in \R^3;\ k_1x_1 = k_2 x_2\}\). The Tikhonov-Fenichel reduction procedure described in \ref{sptredu}, with
\[
 \mu := \begin{pmatrix} -k_1 x_1 + k_2 x_2 , &  x_3\end{pmatrix} ,\  P:=\begin{pmatrix} 1 & 0 \\ -1 & k_4 \\ 0 & -(k_4+k_5) \end{pmatrix}
\]
yields a reduced system on \({ S}\), given by
\[
\dot x = -\frac{k_3 k_5 x_2}{(k_1+k_2)(k_4+k_5)} \begin{pmatrix} k_2 \\ k_1 \\0 \end{pmatrix}.
\]
On the other hand $k_3=0 $ also defines a QSS parameter value with respect to $A_3$; the QSS variety is given by $x_3 = 0$ and has dimension two. The QSS-reduced system is given by
\[
\begin{array}{rccclcl}
 \dot x_1 &=& -k_1x_1 & + & k_2 x_2 & \\
 \dot x_2 &=& k_1x_1 &-& \left(k_2 + \frac{k_3}{k_4+k_5} \right)x_2
\end{array}
\]
Thus the slow manifold is a proper subvariety of the QSS variety. One can verify that a singular perturbation reduction of the QSS-reduced system will provide the same one-dimensional equation on $S$. From a pragmatic perspective, one may prefer the direct reduction to $S$.
\end{example}
\bigskip

\noindent
{\bf Acknowledgement.} We thank two anonymous reviewers for valuable comments.

\end{document}